\documentclass[11pt]{article}

\usepackage{ amssymb, amsmath, amsthm, amsfonts, mathrsfs, mathtools }
\usepackage[T1]{fontenc}
\usepackage{graphicx, latexsym, tabularx, shapepar, enumerate}
\usepackage[enableskew]{youngtab}
\usepackage{xy}
\xyoption{all}
\usepackage{tikz}
\usepackage{tikz-cd}
\usepackage{tkz-euclide}
\usepackage{fullpage}
\hyphenation{ho-mo-ge-ne-ous}
\usepackage{stmaryrd}
\usepackage{enumerate}
\usepackage{float}
\usepackage{caption}
\usepackage[super]{nth}
\usepackage[backend=bibtex,style=numeric-comp, doi=false, url=false, isbn=false]{biblatex}
\bibliography{references} 

\usepackage{parskip}
\setlength{\parindent}{0cm}

\theoremstyle{plain}
\newtheorem{thm}{Theorem}[subsection]
\newtheorem{lemma}[thm]{Lemma}
\newtheorem{cor}[thm]{Corollary}
\newtheorem{prop}[thm]{Proposition}
\theoremstyle{definition}

\newtheorem{defn}[thm]{Definition}
\newtheorem{ex}[thm]{Example}

\newcommand{\CC}{\mathbb{C}}

\renewcommand{\phi}{\varphi}
\usepackage{ytableau}
\newcommand{\barr}[1]{\text{\footnotesize{\(\overline{\overline{#1}}\)}}}
\newcommand{\sbar}[1]{\text{\small{\(\overline{#1}\)}}}

\usetikzlibrary{shapes}
\tikzset{
    arrowMe/.style={
        postaction=decorate,
        decoration={
            markings,
            mark=at position 0.55 with {\arrow[thick,scale=1.5]{#1}}
        }
    }
}

\begin{document}

\newcommand{\Addresses}{{
  \bigskip

  \par Yulia Alexander, \textsc{Department of Mathematics and Computer Science, Wesleyan University, Middletown, CT 06459}\par\nopagebreak
  \textit{E-mail address}: \texttt{yalexandr@wesleyan.edu}

  \medskip

  \par Patricia Commins, \textsc{Department of Mathematics and Statistics, Carleton College, Northfield, MN 55057}\par\nopagebreak
  \textit{E-mail address}: \texttt{comminsp@carleton.edu}

  \medskip

  \par Alexandra Embry, \textsc{Department of Mathematics, Indiana University, Bloomington, IN 47405}\par\nopagebreak
  \textit{E-mail address}: \texttt{aiembry@iu.edu}
  
  \medskip
  
  \par Sylvia Frank, \textsc{Department of Mathematics and Statistics, Amherst College, Amherst, MA 01002}\par\nopagebreak
  \textit{E-mail address}: \texttt{sfrank20@amherst.edu}
  
  \medskip

  \par Yutong Li, \textsc{Department of Mathematics and Statistics, Haverford College, Haverford, PA 19041}\par\nopagebreak
  \textit{E-mail address}: \texttt{yli11@haverford.edu}
  
  \medskip
  
  \par Alexander Vetter, \textsc{Department of Mathematics and Statistics, Villanova University, Villanova, PA 19085}\par\nopagebreak
  \textit{E-mail address}: \texttt{avetter@villanova.edu}
  
}}

\title{Deformations of the Weyl Character Formula for $SO(2n+1,\mathbb{C})$ via Ice Models}
\author{Yulia Alexandr 
\and Patricia Commins \and Alexandra Embry \and Sylvia Frank \and Yutong Li \and Alexander Vetter}
\maketitle

\begin{abstract}
    We explore combinatorial formulas for deformations of highest weight characters of the odd orthogonal group $SO(2n+1)$. Our goal is to represent these deformations of characters as partition functions of statistical mechanical models -- in particular, two-dimensional solvable lattice models. In Cartan type $A$, Hamel and King \cite{hamel_symplectic_2002} and Brubaker, Bump, and Friedberg \cite{brubaker_schur_2011} gave square ice models on a rectangular lattice which produced such a deformation. Outside of type $A$, ice-type models were found using rectangular lattices with additional boundary conditions that split into two classes -- those with ``nested'' and ``non-nested bends.'' Our results fill a gap in the literature, providing the first such formulas for type $B$ with non-nested bends. In type $B$, there are many known combinatorial parameterizations of highest weight representation basis vectors as catalogued by Proctor \cite{proctor_young_1994}. We show that some of these permit ice-type models via appropriate bijections (those of Sundaram \cite{sundaram_orthogonal_1990} and Koike-Terada \cite{koike_young_1990}) while other examples due to Proctor do not.
\end{abstract}

\section{Introduction}
Statistical mechanics, and particularly solvable lattice models, have a long history as a source for interesting special functions related to combinatorics and representation theory. In this paper we focus on variants of the six-vertex, or square ice, models on finite two-dimensional lattices. We refer to a model as ``solvable'' if the underlying Boltzmann weights satisfy a Yang-Baxter equation which allows one to prove functional equations for the partition function and thus often ``solve'' for it in closed form. For example, Kuperberg~\cite{kuperberg_symmetry_2002}, building on the work of Izergin and Korepin~\cite{Izergin, 0305-4470-25-16-010}, used ice models to enumerate symmetry classes of alternating sign matrices (ASMs) . Hamel and King \cite{hamel_symplectic_2002} showed that a deformation of Schur polynomials due to Tokuyama could be represented as a partition function of an ice model and Brubaker, Bump, and Friedberg \cite{brubaker_schur_2011} showed that the model is solvable and gave new proofs of Tokuyama's identities.

These latter results opened up the possibility of using solvable lattice models to represent deformations of highest weight characters for all Cartan types. Perhaps not surprisingly, many of Kuperberg's ice models used to find symmetry classes of ASMs play a role here as well; roughly, ASMs generalize permutation matrices and their symmetry classes reflect embeddings of classical Weyl groups in a large enough general linear group. The models for other types split into two main classes according to the shape of the boundary of the lattice model. We refer to them as the ``nested'' and ``non-nested'' models, as pictured in Figure \ref{nested-type-C} and \ref{non-nested-type-C}, respectively.

\begin{figure}[H]
\minipage[b]{.5\textwidth}
\begin{center}\begin{tikzpicture}[scale=0.8]
\foreach \x in {1,2}{
	\draw[thick] (\x, 0.3) -- (\x, 4.7);}
\foreach \y in {1,2,3,4}{
	\draw[thick] (0,\y) -- (2.5,\y);}
\foreach \top in {3}{
	\draw[thick] (2.5, \top) to [out=0, in=90] (3, \top - 0.5) to [out=-90, in=0](2.5, \top - 1);
	\fill (3, \top - 0.5) circle (2pt);}
\draw [thick] (2.5,4) arc (90:0:1.5);
\draw [thick] (4,2.5) arc (0:-90:1.5);
\fill (4,2.5) circle (2pt);
\node [label=left:$x_i$] at (0,1) {};
\node [label=left:$x_j$] at (0,2) {};
\node [label=left:$\overline{x_j}$] at (0,3) {};
\node [label=left:$\overline{x_i}$] at (0,4) {};
\end{tikzpicture}
\end{center}
\caption{Nested ice for type C}\label{nested-type-C}
\endminipage
\minipage[b]{0.5\textwidth}
	\begin{center}
\begin{tikzpicture}[scale=0.8]
\foreach \x in {1,2}{
	\draw[thick] (\x, 0.3) -- (\x, 4.7);}
\foreach \y in {1,2,3,4}{
	\draw[thick] (0,\y) -- (2.5,\y);}
\foreach \top in {4,2}{
	\draw[thick] (2.5, \top) to [out=0, in=90] (3, \top - 0.5) to [out=-90, in=0](2.5, \top - 1);
	\fill (3, \top - 0.5) circle (2pt);}
\node [label=left:$x_i$] at (0,1) {};
\node [label=left:$\overline{x_i}$] at (0,2) {};
\node [label=left:$x_j$] at (0,3) {};
\node [label=left:$\overline{x_j}$] at (0,4) {};
	\end{tikzpicture}
	\end{center}
	\caption{Non-nested ice for type C}\label{non-nested-type-C}\endminipage
\end{figure}

For nested models, Brubaker and Schultz \cite{brubaker_six-vertex_2015} gave deformations of the Weyl character formula for all classical types which generalized deformations of the denominator formula due to Okada \cite{Okada1993}. However, they were not able to evaluate the deformed character explicitly (say, by a determinantal or alternator expression). Moreover, in type $A$, admissible states of ice are in bijection with strict Gelfand-Tsetlin patterns which explains their connection to representation theory. However the nested models are not known to arise from a natural Gelfand-Tsetlin-like parametrization of basis vectors (or any natural functorial operation like a branching rule) in the associated highest weight representations.

In this paper, we focus solely on non-nested models. Non-nested models have been shown to be in bijection with Gelfand-Tsetlin patterns for other types. In type $C$ Hamel and King \cite{hamel_symplectic_2002} gave a model, which was shown to be solvable by Ivanov, in bijection with strict patterns for symplectic representations due to Zhelobenko \cite{zhelobenko_classical_1962}. Formulas for other types remained open. This paper explores non-nested models in type $B$. Here the Gelfand-Tsetlin type patterns for highest weight representations are a Wild West of possible varieties, as catalogued by Proctor, not only due to the existence of spin representations but also multiple constructions of patterns. Various known constructions of patterns in type $B$ are due to Sundaram \cite{sundaram_tableaux_1990}, Koike-Terada \cite{koike_young_1990}, and Proctor \cite{proctor_young_1994}.

Using tableaux rules from Sundaram, we create new Gelfand-Tsetlin-like patterns. We then take the strict Gelfand-Tsetlin-like patterns and create a bijection with new ice models as in Figure \ref{SICE}.

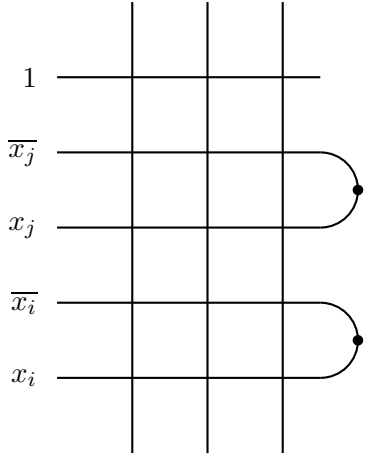
\begin{figure}
\centering
\begin{tikzpicture}
\foreach \x in {0,1,2}{
	\draw[thick] (\x, 0) -- (\x, 6);}
\foreach \y in {1,2,3,4,5}{
	\draw[thick] (-1,\y) -- (2.5,\y);}
\foreach \top in {4,2}{
	\draw[thick] (2.5, \top) to [out=0, in=90] (3, \top - 0.5) to [out=-90, in=0](2.5, \top - 1);
	\fill (3, \top - 0.5) circle (2pt);}
\node [label=left:$1$] at (-1,5) {};
\node [label=left:$x_i$] at (-1,1) {};
\node [label=left:$\overline{x_i}$] at (-1,2) {};
\node [label=left:$x_j$] at (-1,3) {};
\node [label=left:$\overline{x_j}$] at (-1,4) {};

\end{tikzpicture}\caption{Non-nested ice for type B}\label{SICE}
\end{figure}

By assigning Boltzmann weights to the vertices in the resulting ice model, we get a partition function by summing over all admissible states. The result is equal to the product of the type B deformation and the Weyl character formula for $SO(2n+1,\CC)$. In particular,\\
\begin{thm}
Let $\lambda$ be a partition that indexes an irreducible representation of $SO(2n+1,\mathbb{C})$, $s_{\lambda}^{so}$ the corresponding character. Then, the partition function, $\mathcal{Z}(\lambda)$, on ice models stemming from Sundaram tableaux is equal to:
\begin{equation}\label{BS-deformation}
    \mathcal{Z}(\lambda)=\prod_{i=1}^{n}(1+tz_{i})\prod_{i<j}(1+tz_{i}z_{j})(1+z_{i}z_{j}^{-1})s_{\lambda}^{so}
\end{equation}
\end{thm}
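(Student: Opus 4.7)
The plan is to prove the identity in three stages: first transform the partition function into a sum over combinatorial objects via a bijection with patterns; second, compute the Boltzmann weight of each state as an explicit monomial; and third, match the resulting sum to the deformed character through a Tokuyama-style identity.

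First, I would formalize the bijection alluded to before the theorem: admissible states of the ice model in Figure \ref{SICE} are in one-to-one correspondence with strict Gelfand-Tsetlin-like patterns derived from Sundaram tableaux of shape $\lambda$. The boundary conditions (the $1$ labeling the top row, the $x_i,\overline{x_i}$ labels on the remaining rows, and the bend vertices on the right) must be shown to force a row-by-row reading in which each admissible state encodes an interleaving sequence of integer tuples that exactly matches Sundaram's combinatorial data. In particular, the new top row handles the short-root contribution specific to type $B$, while the bends realize the $\overline{x_i}$-to-$x_i$ pairing required to pass from $\mathrm{GL}$-patterns to $SO$-patterns.

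Second, I would enumerate the local vertex configurations (interior six-vertex vertices, top cap-row vertices, and right-hand bend vertices), assign Boltzmann weights in terms of $z_i$ and $t$, and take the product along each state. After simplification, the weight of a state should equal $t^{a(\mathbf{p})} z^{\mathrm{wt}(\mathbf{p})}$, where $\mathbf{p}$ is the corresponding strict pattern, $\mathrm{wt}(\mathbf{p})$ is its character weight, and $a(\mathbf{p})$ counts an appropriate statistic (e.g.\ the number of strict interlacings together with contributions from the cap row and bends). This yields
\[
\mathcal{Z}(\lambda) = \sum_{\mathbf{p}} t^{a(\mathbf{p})} z^{\mathrm{wt}(\mathbf{p})}.
\]

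The main step, and the key obstacle, is then proving the Tokuyama-type identity
\[
\sum_{\mathbf{p}} t^{a(\mathbf{p})} z^{\mathrm{wt}(\mathbf{p})} \;=\; \prod_{i=1}^{n}(1+tz_i)\prod_{i<j}(1+tz_iz_j)(1+z_iz_j^{-1})\, s_\lambda^{so}.
\]
My preferred approach is induction on $n$, peeling off the outermost shell of the pattern and recognizing the recursion as the $SO(2n+1)\!\downarrow\!SO(2n-1)$ branching rule encoded by Sundaram, with the deformation factors emerging as generating functions for the $t$-weighted interleaving choices. An alternative is to follow the Brubaker--Bump--Friedberg strategy and derive the identity from a Yang--Baxter equation for an $R$-matrix compatible with the new top cap row and bend vertices. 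The hard part in either approach is bookkeeping the $t$-statistics through type $B$'s mix of ``even'' and ``odd'' interlacing steps and ensuring the contributions of the cap row and bends combine to produce exactly the factor $\prod(1+tz_i)\prod(1+tz_iz_j)(1+z_iz_j^{-1})$; constructing a suitable $R$-matrix that commutes past both nonstandard boundaries simultaneously is the chief technical difficulty of the Yang--Baxter route.
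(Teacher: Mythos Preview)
Your plan follows a natural template, but it diverges from the paper in a way worth flagging. The paper does not prove a Tokuyama identity for type $B$ from scratch---neither by induction on $n$ nor by a new Yang--Baxter argument. Instead it reduces in a single step to the already-established symplectic case of Gray and Ivanov. The key observation is that, with the Boltzmann weights of Figure~\ref{BWDeltGamIce}, every vertex in the extra top row (the row labeled $0$) has weight $1$. Deleting that row therefore leaves the weight of each state unchanged, and what remains is exactly a Gray/Ivanov $U$-turn ice whose top boundary is the second row $\mu+\rho$ of the Sundaram pattern. By the Gray/Ivanov theorem the partition function over states with that fixed second row equals $C^{*}s_{\mu}^{sp}$. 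Summing over the admissible second rows---which differ from $\lambda$ by a horizontal strip---and applying Sundaram's identity $s_{\lambda}^{so}=\sum_{\mu\subseteq\lambda}s_{\mu}^{sp}$ (Equation~\eqref{HorizontalStripEquation}) yields $\mathcal{Z}(\lambda)=C^{*}s_{\lambda}^{so}$ with the type-$C$ factor $C^{*}$. A final local reweighting of the bend vertices then converts $C^{*}$ into the advertised type-$B$ product $\prod_{i}(1+tz_{i})\prod_{i<j}(1+tz_{i}z_{j})(1+z_{i}z_{j}^{-1})$.

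There is also a concrete gap in your inductive route: Sundaram's patterns do \emph{not} encode the branching $SO(2n+1)\downarrow SO(2n-1)$; that is what the Koike--Terada patterns do. The outermost layer of a Sundaram pattern records instead the passage $SO(2n+1)\to Sp(2n)$ via a horizontal strip, followed by Zhelobenko's $Sp(2n)\downarrow Sp(2n-2)$ restriction in the rows below. So ``peeling off the outermost shell and recognizing $SO(2n+1)\downarrow SO(2n-1)$'' would not match the combinatorics you have set up; any induction would have to pass through $Sp$, which is precisely what the paper exploits in a single stroke rather than rank by rank.
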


We further explore the possibility of using ice models for the Koike-Terada and Proctor patterns, showing that it is increasingly difficult to find a solvable model in these cases (Sections \ref{KTICEsection} and \ref{ProctorICE}.

Finally, we mention some additional applications. First, ice models seem particularly appealing because they allow one to simultaneously observe branching rules (via connections to patterns) and Weyl group symmetry (via the Yang-Baxter equation). Seeing the Weyl group action on patterns is more complicated \cite{BerensteinKirillov}. Second, the non-nested deformations match the Shintani-Casselman-Shalika \cite{CasselmanShalika} formula for the special values of spherical Whittaker functions on $p$-adic reductive groups. These are an important component in the local theory of automorphic forms and their connection to solvable lattice models hints at potentially deep connections between automorphic forms and quantum groups. Our results may form the basis for further generalizations to Whittaker functions on metaplectic covers of symplectic groups (whose dual group is of type $B$), an area under active development in type $A$ already \cite{BBB, BBBG}.

\subsection{Content of Paper}
This paper is organized as follows. We begin by introducing combinatorial objects in Section \ref{Prelim} including Gelfand-Tsetlin Patterns, semistandard Young tableaux, shifted Young tableaux, and ice model, and most importantly, Tokuyama's formula, which gives us the notion of a deformed character. In Section \ref{SundaramSection}, we use tableaux due to Sundaram to create Gelfand-Tsetlin-like patterns, shifted tableaux, and strict Gelfand-Tsetlin-like patterns. In Section \ref{TableauxIceModelsSundaram} introduces the ice model corresponding to this type of shifted tableaux. In Section \ref{MainTheorem}, we assign Boltzmann weights to the ice model and present our main results, Theorem \ref{CdeformationtimesBcharacter} and Corollary \ref{MainCor}, which give a deformation of the Weyl Character Formula for $SO(2n+1,\mathbb{C})$ via ice models. In Section \ref{KTSection}, we discuss tableaux due to Koike and Terada, introducing another class of Gelfand-Tsetlin-type patterns as well as ice models. Finally, in Section \ref{ProctorSection} we examine tableaux due to Proctor and show that these branching rules cannot be realized through ice.

\subsection{Acknowledgements} 
This research was conducted at the 2018 University of Minnesota-Twin Cities REU in Algebraic Combinatorics. Our research was supported by NSF RTG grant DMS-1745638. We thank Dr. Benjamin Brubaker and Katherine Weber for their advice and support during our time in the Twin Cities. We also thank Jiyang Gao for his contributions to the project.

\section{Combinatorial Representation Theory for $GL(n, \CC)$}\label{Prelim}

\subsection{Tokuyama's Formula}
In the well-studied case of $GL(n,\mathbb{C})$, Tokuyama proposed a generalization that recovers a deformation of Weyl's formula by summing over a combinatorial basis, namely, the strict Gelfand-Tsetlin patterns. Throughout, let $\lambda=(\lambda_{n}\geq\lambda_{n-1}\geq\cdots\geq\lambda_{1}\geq 0)$ be a partition with at most $n$ parts, which indexes an irreducible representation of $GL(n, \CC)$.

\par A Gelfand-Tsetlin (GT) pattern is a triangular array of non-negative integers:
\begin{equation*}
\begin{array}{ccccccccc}
a_{1,1}& & a_{1,2}  & & \cdots & & a_{1,n-1} & & a_{1,n}\\
&a_{2,1} &&a_{2,2}  & \cdots & a_{2,n-1} && a_{2,n}\\
&&&& \cdots &&&& \\
&&& a_{n-1,1} && a_{n-1,2}&&& \\
&&&& a_{n,1} &&&&
\end{array}
\end{equation*}
subject to:
\begin{enumerate}
	\item $a_{i,j-1}\geq a_{i,j}\geq a_{i,j+1}\geq 0$ 
	\item $a_{i-1,j}\geq a_{i,j}\geq a_{i-1,j+1}$
\end{enumerate}
For any triplet $\begin{array}{ccc}a&&b\\&c&\end{array}$ in a GT pattern, we say the entry $c$ is \textit{special} if $a > c > b$, and \textit{left-leaning} if $a = c$.
A strict Gelfand-Tsetlin pattern is such an array where the rows are strictly decreasing. 

\par Alternatively, the irreducible representation indexed by $\lambda$ can be parametrized by semistandard Young tableaux of shape $\lambda$, filled with the alphabet $\{1<2< \dots <n\}$ such that:
\begin{enumerate}
	\item Rows are weakly increasing.
	\item Columns are strictly increasing.
\end{enumerate}

\par Let $SSYT(\lambda)$ be the set of semistandard Young tableaux with shape $\lambda$. The Schur polynomial can be expressed as a sum over column-strict Young tableaux. In particular, the following is well-known and can be found in Chapter 7 of \cite{Stanley}.\\
\begin{thm}
Let $\mathbf{z}^{wt(T)} = \prod_{i=1}^{n}z_{i}^{c(i)}$, and $c(i)$ is the number of i's in a tableau $T$. Then,
\begin{equation*}
s_{\lambda}(\mathbf{z})=\sum_{T\in SSYT}\mathbf{z}^{wt(T)}
\end{equation*}
\end{thm}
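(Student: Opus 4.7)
The plan is to establish the identity via the Lindström--Gessel--Viennot (LGV) lemma combined with the Jacobi--Trudi formula for Schur polynomials. First, I would recall the Jacobi--Trudi expansion $s_{\lambda}(\mathbf{z}) = \det\bigl(h_{\lambda_i - i + j}(\mathbf{z})\bigr)_{1 \le i,j \le n}$, where $h_k$ denotes the $k$-th complete homogeneous symmetric polynomial. This turns the target identity into a purely combinatorial matching problem between a signed determinantal expansion and the positive sum over $\mathrm{SSYT}(\lambda)$.

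Next, I would set up a planar acyclic lattice with sources $u_i = (1-i,\, 0)$ and sinks $v_j = (\lambda_j + n - j,\, n)$, where paths may take unit east or north steps, and each east step at height $k$ carries weight $z_k$. A direct bijection between monotone lattice paths and weakly increasing sequences of heights shows that the generating function for a single path from $u_i$ to $v_j$ is exactly $h_{\lambda_i - i + j}(\mathbf{z})$, so the Jacobi--Trudi determinant becomes $\det\bigl(\mathcal{P}(u_i \to v_j)\bigr)$.

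Then I would invoke the LGV lemma to express this determinant as the signed sum over all $n$-tuples of paths from the $u_i$ to the $v_j$, and observe that contributions from intersecting tuples cancel in pairs via swapping tails at the first intersection. What remains is a sum over $n$-tuples of pairwise non-intersecting paths. The heart of the argument is a weight-preserving bijection between such non-intersecting tuples and $\mathrm{SSYT}(\lambda)$: the east steps of the $i$-th path at height $k$ record exactly the positions of the entry $k$ in the $i$-th row of $T$ (counted from the bottom). Under this encoding the path weight becomes $\mathbf{z}^{wt(T)}$. I expect the main obstacle to be verifying carefully that the strict column condition on $T$ corresponds precisely to pairwise disjointness of the paths; this is exactly the role of the diagonal offset $-i$ in the source placement and $-j$ in the sink placement, but checking the equivalence requires a row-by-row comparison.

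As an alternative, more representation-theoretic route, one could start from Weyl's formula $s_{\lambda} = a_{\lambda+\rho}/a_{\rho}$ with $a_{\mu} = \det\bigl(z_j^{\mu_i}\bigr)$, prove that $\sum_{T} \mathbf{z}^{wt(T)}$ is symmetric in $(z_1,\dots,z_n)$ using the Bender--Knuth involutions (which swap the multiplicities of $i$ and $i+1$ within each row while preserving column-strictness), and then match $a_{\rho} \cdot \sum_{T} \mathbf{z}^{wt(T)}$ with $a_{\lambda+\rho}$ by a sign-reversing involution on pairs $(T, w) \in \mathrm{SSYT}(\lambda) \times S_n$ that cancels all contributions outside those recording the highest weight $\lambda + \rho$. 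Either route suffices for the statement, and both are developed in Chapter 7 of \cite{Stanley} which the excerpt cites.
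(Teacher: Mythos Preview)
Your proposal is correct, but it is worth noting that the paper does not actually prove this statement: it is presented as background, introduced with ``The following is well-known and can be found in Chapter~7 of \cite{Stanley},'' and no argument is given. So there is nothing in the paper to compare your approach against beyond the bare citation.

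That said, both routes you outline are standard and are precisely the ones developed in the cited chapter of Stanley. The LGV/Jacobi--Trudi argument is clean and self-contained once one has the Jacobi--Trudi identity; the only point requiring care, as you anticipate, is the translation between column-strictness and non-intersection, and the diagonal offsets in the source/sink placements handle this exactly. The alternative via Bender--Knuth involutions and the bialternant form of $s_\lambda$ is also complete, though the sign-reversing involution matching $a_\rho \cdot \sum_T \mathbf{z}^{wt(T)}$ with $a_{\lambda+\rho}$ is somewhat more delicate to set up than you indicate. Either argument would more than suffice here, since the paper only needs the statement as a black box to motivate the analogous character formulas in types $B$ and $C$.
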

\par It is well known that we have a bijection between the two combinatorial bases for the characters of irreducible representations, as illustrated in Example \ref{tab-GT-bijeciton}.
\begin{equation*}
SSYT(\lambda)\xleftrightarrow{1 \text{ to }1} GT(\lambda)
\end{equation*}

\begin{ex}\label{tab-GT-bijeciton}
\begin{equation*}
\ytableausetup{boxsize=2em}
\begin{ytableau}
    1 & 1 & 1 & 3 & 3\\
    2 & 2 & 2 \\
    3 & 3 
\end{ytableau}
\quad \leftrightarrow
\begin{array}{cccccc}
& 5 && 3 && 2 \\
&& 3 && 3 & \\
&&& 3 &&
\end{array}
\end{equation*}
\end{ex}
\par Further, it follows that, if $GT(\lambda)$ is the set of Gelfand-Tsetlin patterns with top row $\lambda$, and $R_{i}$ be the sum of the $i^{th}$ row in the GT pattern. The Schur polynomial associated to $\lambda$ can be expressed as:
\begin{equation*}
s_{\lambda}(\mathbf{z})=\sum_{P\in GT(\lambda)}\mathbf{z}^{wt(P)},
\end{equation*}
where $\mathbf{z}^{wt(P)}:=z_{1}^{p_{1}}z_{2}^{p_{2}}\dots z_{n}^{p_{n}}$, and $wt(P)$ is the $n$-tuple $(p_{1},p_{2},\dots ,p_{n})$, with $p_{i}=R_{i}-R_{i+1}$ for $i\in\{1,\dots,n-1\}$,  $p_{n}=R_{n}$.
\par Motivated by the above bijection, we present the following deformation in type A:\\ 
\begin{thm}[Tokuyama's Formula, Theorem 2.1~\cite{tokuyama_generating_1988}]
Let $t$ be an arbitrary parameter, $\lambda$ any partition, $\rho=(n-1,n-2,\dots,1,0)$, and $SGT(\lambda+\rho)$ the set of strict Gelfand-Tsetlin patterns with top row $\lambda+\rho$. Then,
\begin{equation}\label{TokuyamasFormula}
\prod_{1\leq i<j\leq n} (z_{i}+tz_{j})s_{\lambda}(\mathbf{z})=\sum_{T\in SGT(\lambda+\rho)} (1+t)^{S(T)}t^{L(T)}\mathbf{z}^{wt(T)}
\end{equation}
where 
\begin{itemize}
    \item $S(T)$ is the number of special entries in $T$
    \item $L(T)$ is the number of left-leaning entries in $T$
\end{itemize}
\end{thm}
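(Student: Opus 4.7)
The plan is to prove Tokuyama's formula by induction on $n$. The base case $n=1$ is immediate: both sides equal $z_1^{\lambda_1}$, since the product over pairs $i<j$ is empty and the unique strict GT pattern with top row $(\lambda_1)$ has no special or left-leaning entries.

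For the inductive step, I group strict GT patterns $T\in SGT(\lambda+\rho)$ according to their second row $\nu$. Being strictly decreasing and interlacing the top row, $\nu$ can be written uniquely as $\nu=\mu+\rho^{(n-1)}$ for a partition $\mu$ with at most $n-1$ parts. Deleting the top row of $T$ produces an arbitrary strict GT pattern $T'$ of rank $n-1$ with top row $\nu$, and since the special/left-leaning classification is local to each three-entry triangle, the weight factors as
\begin{equation*}
(1+t)^{S(T)}t^{L(T)}\mathbf{z}^{wt(T)}=W_{\lambda,\mu}(t)\cdot z_1^{|\lambda+\rho|-|\nu|}\cdot(1+t)^{S(T')}t^{L(T')}(\mathbf{z}')^{wt(T')},
\end{equation*}
where $\mathbf{z}'=(z_2,\ldots,z_n)$ and $W_{\lambda,\mu}(t)$ collects a factor of $t$ for each left-leaning entry and $(1+t)$ for each special entry of $\nu$.

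Applying the inductive hypothesis to the inner sum over $T'$ with fixed top row $\nu$ yields $\prod_{2\leq i<j\leq n}(z_i+tz_j)\,s_\mu(\mathbf{z}')$. Since $\prod_{j=2}^n(z_1+tz_j)$ is precisely the factor missing from $\prod_{i<j}(z_i+tz_j)$, the inductive step reduces to the single-row identity
\begin{equation*}
\prod_{j=2}^n(z_1+tz_j)\,s_\lambda(\mathbf{z})=\sum_\mu z_1^{|\lambda+\rho|-|\mu+\rho^{(n-1)}|}\,W_{\lambda,\mu}(t)\,s_\mu(\mathbf{z}'),
\end{equation*}
where the sum runs over partitions $\mu$ with at most $n-1$ parts whose $\rho^{(n-1)}$-shift interlaces $\lambda+\rho$.

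This single-row identity is the main obstacle; everything else is bookkeeping. My strategy is to expand both sides as polynomials in $z_1$. On the left, the Schur branching rule $s_\lambda(\mathbf{z})=\sum_{\mu}z_1^{|\lambda|-|\mu|}s_\mu(\mathbf{z}')$ (summed over $\mu$ interlacing $\lambda$) is combined with the binomial expansion of $\prod_{j=2}^n(z_1+tz_j)$, whose terms are indexed by subsets $S\subseteq\{2,\ldots,n\}$ recording which factors contribute $z_1$ versus $tz_j$. On the right, $W_{\lambda,\mu}(t)$ records the local geometry of each entry of $\nu$ relative to $\lambda+\rho$: left-leaning slots contribute $t$, right-leaning slots contribute a factor of $z_1$, and special slots contribute $(1+t)z_1$. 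Splitting each $(1+t)=1+t$ refines the RHS into a sum of monomials indexed by ``decorated'' interlacings that must biject with the monomials from the LHS expansion. Verifying this bijection—particularly tracking how the $\rho$-shift reconciles the ordinary interlacing of $\mu$ with $\lambda$ against the strict interlacing of $\mu+\rho^{(n-1)}$ with $\lambda+\rho$, and confirming that the three local configurations (left-leaning, right-leaning, special) generate exactly the monomials $t$, $z_1$, and $\{1, tz_1\}$ needed to match the binomial and branching expansions—is the combinatorial crux of the argument.
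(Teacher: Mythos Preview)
The paper does not prove this theorem; Tokuyama's formula is quoted as background and attributed to \cite{tokuyama_generating_1988}, so there is no in-paper argument to compare your proposal against.

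On its own terms, your inductive reduction to the single-row identity is carried out correctly. The gap is that you stop exactly where the content lies, and the sketch you offer of the missing step is wrong. Your claim that ``right-leaning slots contribute a factor of $z_1$ and special slots contribute $(1+t)z_1$'' does not hold: the total $z_1$-exponent attached to a fixed second row $\nu$ is $|\lambda+\rho|-|\nu|=|\lambda|-|\mu|+(n-1)$, and the slot-$k$ contribution to this is $(\lambda+\rho)_k-\nu_k$, which for a right-leaning entry equals $(\lambda+\rho)_k-(\lambda+\rho)_{k+1}$, not $1$. So no per-slot monomial bijection of the form you describe exists. Relatedly, after the $\rho$-shift the condition on $\mu$ becomes $\lambda_{k+1}\le\mu_k\le\lambda_k+1$, which is \emph{not} the ordinary interlacing coming from the branching rule for $s_\lambda$; the extra value $\mu_k=\lambda_k+1$ is precisely the left-leaning case. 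Hence the two sides are not indexed by the same set of $\mu$'s, and a naive matching of ``subsets $S$'' to ``slot decorations'' cannot work.

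What does work is to insert one Pieri step. Writing $\prod_{j\ge2}(z_1+tz_j)=\sum_{k}z_1^{\,n-1-k}t^{k}e_k(\mathbf z')$ and expanding $e_k(\mathbf z')\,s_{\tilde\mu}(\mathbf z')$ by the Pieri rule (vertical strips), the coefficient of $s_\mu(\mathbf z')$ on the left becomes $z_1^{\,|\lambda|-|\mu|+n-1}\sum_{\tilde\mu}t^{|\mu/\tilde\mu|}$, summed over $\tilde\mu$ interlacing $\lambda$ with $\mu/\tilde\mu$ a vertical strip. For each slot $k$ the admissible values $\tilde\mu_k\in\{\mu_k,\mu_k-1\}\cap[\lambda_{k+1},\lambda_k]$ are: only $\mu_k-1$ when $\mu_k=\lambda_k+1$ (left-leaning, factor $t$), only $\mu_k$ when $\mu_k=\lambda_{k+1}$ (right-leaning, factor $1$), and both otherwise (special, factor $1+t$). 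Because $\tilde\mu$ interlaces $\lambda$ we automatically have $\tilde\mu_k\ge\lambda_{k+1}\ge\tilde\mu_{k+1}$, so these slot choices are independent and the sum factors as $t^{L}(1+t)^{S}=W_{\lambda,\mu}(t)$. That is the computation your outline is missing; once it is supplied, the induction closes.
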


\par Under the specialization of $t=-1$, the formula yields a deformation of the Weyl character formula for $GL(n,\mathbb{C})$. When $t=0$, Tokuyama's formula gives the original description of the character in terms of Gelfand-Tsetlin patterns.

\par Just like the bijection between GT patterns and seminstandard Young tableaux, shifted tableaux are in bijective correspondence with strict Gelfand-Tsetlin patterns. Let $\rho=(n-1,n-2,\dots,0)$. We consider a shifted Young tableaux to be of shape $\lambda + \rho$ by attaching a standard tableaux of shape $\rho$ to the left side of the standard tableaux of shape $\lambda$. The example below illustrates this process.\\
\begin{ex}
Let $\lambda=(5,3,2,1)$, so $\rho=(3,2,1,0)$. Then, a shifted tableaux of shape $\lambda+\rho$ looks like:
\begin{equation*}
\young(\:\:\:\:\:\:\:\:,:\:\:\:\:\:,::\:\:\:,:::\:)
\end{equation*}
From this change, the rules for filling the diagram are modified accordingly:
\begin{enumerate}
	\item Rows are weakly increasing.
	\item Columns are weakly increasing.
	\item Diagonals are strictly increasing.
\end{enumerate}
Let $SYT(\lambda +\rho)$ be the set of shifted Young tableaux satisfying the above conditions. From these tableaux, we again have a bijection: 
\begin{equation*}
SYT(\lambda+\rho)\xleftrightarrow{1 \text{ to }1} SGT(\lambda+\rho)
\end{equation*}
\end{ex}

\subsection{Ice Models}
In the case of $GL(n, \mathbb{C})$, states of ice are certain tetravalent directed graphs. More precisely, we require two edges pointed to and from each vertex. Let $\lambda=(\lambda_{n}\geq\lambda_{n-1}\geq\dots\geq\lambda_{1}\geq 0)$ be a partition, then the admissible ice states corresponding to $(\lambda+\rho)$ are $n\times (n+\lambda_n)$ grid graphs satisfying:
\begin{enumerate}
	\item The rows are labeled from bottom to top in increasing order of the alphabet for $GL(n,\mathbb{C})$, and columns are numbered $0$ through $\lambda_{n}+n-1$ as we move from right to left.
	\item All arrows on the left-hand side point to the right.
	\item All arrows on the bottom point down.
	\item All arrows on the right-hand side point to the left.
	\item If $i\in (\lambda+\rho)$, the top-most arrow in column $i$ points up; otherwise it points down.
\end{enumerate}
The final four conditions are boundary conditions, which we show in Figure \ref{IceBoundary}.
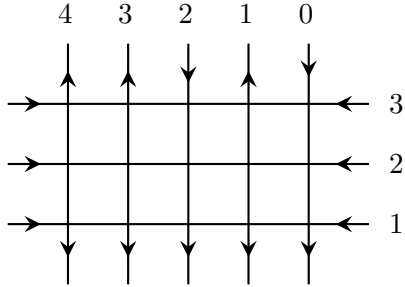
\begin{figure}[H]\centering 
\begin{tikzpicture}[scale=.8]
\foreach \x in {0,1,2,3,4}{
	\draw[thick] (\x, -1) -- (\x, 3);}
\foreach \y in {0,1,2}{
	\draw[thick] (-1,\y) -- (5,\y);}

\foreach \x/\y in {0/0,1/0,2/0, 3/0, 4/0, 2/2.9, 4/3}{
	\draw[arrowMe=stealth] (\x, \y) -- (\x, \y-1);}
\foreach \x/\y in {-1/0,-1/1,-1/2}{
	\draw[arrowMe=stealth] (\x, \y) -- (\x+1, \y);}
\foreach \x/\y in {0/2,1/2, 3/2}{
	\draw[arrowMe=stealth] (\x, \y) -- (\x, \y+1);}
\foreach \x/\y in {5/0,5/1,5/2} {
    \draw[arrowMe=stealth] (\x, \y) --(\x-1, \y);}

		\node [label=right:$3$] at (5, 2) {};
	\node [label=right:$2$] at (5, 1) {};
	\node [label=right:$1$] at (5, 0) {};
	\node [label=right:$4$] at (-0.5, 3.5) {};
	\node [label=right:$3$] at (0.5, 3.5) {};
	\node [label=right:$2$] at (1.5, 3.5) {};
	\node [label=right:$1$] at (2.5, 3.5) {};
	\node [label=right:$0$] at (3.5, 3.5) {};
\end{tikzpicture}
    \caption{Ice Boundary Conditions for $\lambda+\rho = (4, 2, 1)$ }
\label{IceBoundary}\end{figure}

The remaining edges can be filled such that every vertex has two arrows pointing in and two arrows pointing out. It turns out the admissible states of the ice models are in bijection with strict Gelfand-Tsetlin patterns. That is, 
\begin{equation*}
SGT(\lambda+\rho)\xleftrightarrow{1 \text{ to }1} ICE(\lambda+\rho)
\end{equation*}
We can view the vertices of the ice model as points on a lattice. For $T\in SGT$, and $a_{i,j}\in T$, then the vertex in the $i^{th}$ row from the top and in the column indexed by $a_{i,j}$ has an up arrow directly above it. The remaining column edges have down arrows.

\subsubsection{Boltzmann Weights and Partition Function}
Depending on the orientation of the arrows around a vertex, each vertex is assigned a particular Boltzmann weight. Invalid configurations of arrows have a Boltzmann weight of 0. Let $i$ indicate the $i^{th}$ row of the ice model. The following are nonzero Boltzmann weights for the $GL(n,\mathbb{C})$ ice models as provided in \cite{brubaker_schur_2011}.
\begin{figure}[H]
\begin{center}
\begin{tabular}
{|c|c|c|c|c|c|}
\hline
$\begin{tikzpicture}
\node [label=left:$j$] at (0.5,1) {};
\node [label=right:$ $] at (1.5,1) {};
\node [label=above:$ $] at (1,1.5) {};
\node [label=below:$ $] at (1,0.5) {};
\draw [>->] (0.5,1) -- (1.5,1);
\draw [>->] (1,1.5) -- (1,0.5);
\end{tikzpicture}$
&
$\begin{tikzpicture}
\node [label=left:$j$] at (0.5,1) {};
\node [label=right:$ $] at (1.5,1) {};
\node [label=above:$ $] at (1,1.5) {};
\node [label=below:$ $] at (1,0.5) {};
\draw [<-<] (0.5,1) -- (1.5,1);
\draw [<-<] (1,1.5) -- (1,0.5);
\end{tikzpicture}$
&
$\begin{tikzpicture}
\node [label=left:$j$] at (0.5,1) {};
\node [label=right:$ $] at (1.5,1) {};
\node [label=above:$ $] at (1,1.5) {};
\node [label=below:$ $] at (1,0.5) {};
\draw [>->] (0.5,1) -- (1.5,1);
\draw [<-<] (1,1.5) -- (1,0.5);
\end{tikzpicture}$
&
$\begin{tikzpicture}
\node [label=left:$j$] at (0.5,1) {};
\node [label=right:$ $] at (1.5,1) {};
\node [label=above:$ $] at (1,1.5) {};
\node [label=below:$ $] at (1,0.5) {};
\draw [<-<] (0.5,1) -- (1.5,1);
\draw [>->] (1,1.5) -- (1,0.5);
\end{tikzpicture}$
&
$\begin{tikzpicture}
\node [label=left:$j$] at (0.5,1) {};
\node [label=right:$ $] at (1.5,1) {};
\node [label=above:$ $] at (1,1.5) {};
\node [label=below:$ $] at (1,0.5) {};
\draw [<->] (0.5,1) -- (1.5,1);
\draw [>-<] (1,1.5) -- (1,0.5);
\end{tikzpicture}$
&
$\begin{tikzpicture}
\node [label=left:$j$] at (0.5,1) {};
\node [label=right:$ $] at (1.5,1) {};
\node [label=above:$ $] at (1,1.5) {};
\node [label=below:$ $] at (1,0.5) {};
\draw [>-<] (0.5,1) -- (1.5,1);
\draw [<->] (1,1.5) -- (1,0.5);
\end{tikzpicture}$
\\
\hline
$NW=1$&$SE=z_i$&$SW=t$&$NE=z_i$&$NS=z_i(t+1)$&$EW=1$\\ \hline
\end{tabular}
\caption{Boltzmann weights for each of the six valid vertex orientations}
\label{BoltzmannWeights}
\end{center}
\end{figure}

From this description of ice models for $GL(n,\mathbb{C})$, we define a partition function $Z(\lambda)$. Let $ICE(\lambda+\rho)$ be the set of all admissible ice states with top row $\lambda +\rho$. For a particular $\mathcal{I}\in ICE(\lambda+\rho)$, assign the Boltzmann weights as in Figure \ref{BoltzmannWeights}. Let $w(\mathcal{I})$ be the product of the Boltzmann weights of each vertex in the ice model. Then, our partition function is the following:
\begin{equation*}
Z(\lambda)=\sum_{\mathcal{I}\in ICE}w(\mathcal{I})
\end{equation*}
The following is proved in \cite{brubaker_schur_2011}:
\\ \begin{thm}
\begin{equation*}
Z(\lambda)= \prod_{1\leq i<j\leq n} (z_{i}+tz_{j})s_{\lambda}(\mathbf{z})
\end{equation*}
\end{thm}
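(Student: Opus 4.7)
The plan is to reduce the theorem to Tokuyama's formula (Theorem 2.1 of Tokuyama cited in the paper) by showing that the bijection $ICE(\lambda+\rho) \leftrightarrow SGT(\lambda+\rho)$ is weight-preserving, in the sense that $w(\mathcal{I}) = (1+t)^{S(T)} t^{L(T)} \mathbf{z}^{wt(T)}$ whenever $\mathcal{I}$ corresponds to $T$. Summing over both sides then gives $Z(\lambda) = \sum_T (1+t)^{S(T)} t^{L(T)} \mathbf{z}^{wt(T)}$, and Tokuyama's formula identifies this sum with $\prod_{i<j}(z_i + t z_j) s_\lambda(\mathbf{z})$.

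The first step is to make the bijection completely explicit so that I can read off the vertex types. Recall that an up-arrow sits above vertex $(i,c)$ if and only if $c$ appears among the entries $a_{i,1}, \ldots, a_{i,n-i+1}$ of row $i$ (numbered from the top) of the pattern $T$. Combined with the boundary conditions (all horizontal arrows on the left point right, all on the right point left, and the bottom arrows point down), the ice rule at each vertex forces the horizontal arrow data to be uniquely determined by the vertical data; in particular, the horizontal arrow on the right edge of vertex $(i,c)$ equals the XOR of boundary data and the vertical arrow configuration to its right in row $i$. This is what allows the classification in terms of entries of $T$.

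Next, I would classify each vertex of row $i$ into the six types of Figure \ref{BoltzmannWeights} using only local information from $T$. A column $c$ with an up-arrow above but a down-arrow below vertex $(i,c)$ corresponds to an entry in row $i$ that is not an entry in row $i+1$; tracking horizontal arrows one sees this vertex is of type NE, NW, or NS, and in fact NS (weight $z_i(1+t)$) occurs precisely when the entry is \emph{special} in the sense of the paper's triple $\begin{smallmatrix} a & & b \\ & c & \end{smallmatrix}$ with $a > c > b$. Similarly, a column with an up-arrow above and above the vertex below is SW (weight $t$) precisely when $a_{i,j} = a_{i-1,j}$, i.e.\ the entry is \emph{left-leaning}. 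The remaining vertex types (SE, NE, EW, NW) contribute weights $z_i$, $z_i$, $1$, $1$ respectively, and a short count shows that summing over all columns in row $i$ the number of SE/NE/NS vertices (each contributing one factor of $z_i$) equals $R_i - R_{i+1} = p_i$, producing the correct monomial $z_i^{p_i}$.

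Putting this together gives the promised factorization of $w(\mathcal{I})$, and invoking Tokuyama's formula completes the proof. The main obstacle, and the only step that requires genuine work, is the case analysis identifying NS-vertices with special entries and SW-vertices with left-leaning ones: the horizontal arrow at a given edge depends on the cumulative vertical configuration to its right, so one must argue that the global propagation behaves correctly locally. This is essentially the content of the weight-matching lemma in \cite{brubaker_schur_2011}, and I would follow their case analysis, organizing it by whether the column $c$ under consideration appears in rows $i-1$, $i$, and $i+1$ of the pattern.
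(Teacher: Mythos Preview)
The paper does not actually prove this theorem; it simply records it as a result of \cite{brubaker_schur_2011}. Your strategy --- show that the $ICE(\lambda+\rho) \leftrightarrow SGT(\lambda+\rho)$ bijection is weight-preserving and then invoke Tokuyama's formula --- is precisely one of the proofs given in that reference, so the plan is sound and consistent with what the paper cites.

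That said, your case analysis misidentifies the vertex types. In the paper's convention a state is named by the edges pointing \emph{into} the vertex. An up-arrow above the vertex means the north edge points \emph{out}, and a down-arrow below means the south edge points \emph{out}; by the two-in/two-out rule both horizontal edges must then point in, so such a vertex is uniquely of type $EW$ (weight $1$), not ``NE, NW, or NS'' as you claim. The $NS$ configuration instead has both vertical edges pointing \emph{in} --- a down-arrow above and an up-arrow below --- which means the column $c$ belongs to GT row $i+1$ but not to GT row $i$. That is exactly the condition that the entry $a_{i+1,j}=c$ be special with respect to row $i$, so $NS$ does correspond to special entries, just for the opposite vertical pattern from the one you describe. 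Similarly, $SW$ and $SE$ both have N out and S in (so $c$ lies in rows $i$ and $i+1$ simultaneously), and it is the horizontal arrows that distinguish left-leaning from right-leaning. Once these conventions are straightened out, the weight-matching and your reduction to Tokuyama go through as intended.
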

\par Notice that the partition function is again an equivalent description of Tokuyama's formula, hence the deformed character of $GL(n,\CC)$.

\section{From Sundaram Tableaux to Ice Models}\label{SundaramSection}
\par In this section, we explore known tableaux for $SO(2n+1,\mathbb{C})$ due to Sundaram \cite{sundaram_orthogonal_1990}. We discuss the origins of these tableaux rules through branching rules. We then create Gelfand-Tsetline-like patterns in bijection with these tableaux. Next, from the tableaux, we create shifted tableaux, and from the Gelfand-Tsetlin-like patterns, we create strict Gelfand-Tsetlin-like patterns. We show the shifted tableaux and strict Gelfand-Tsetline-like patterns are in bijection. Finally, we create new ice models that are in bijection with these shifted tableaux and strict Gelfand-Tsetlin-like patterns are in bijection.
\subsection{Construction}
Let $\lambda = (\lambda_{n}\geq\lambda_{n - 1}\geq\dots\lambda_{2}\geq 0)$ be a partition with n parts. Then, $\lambda$ is the highest weight of an irreducible representation of $SO(2n+1)$. Further, we can find the character of this representation through the use of certain Young tableaux. Let $\lambda$ define the length of the rows of such a tableaux. We fill the Young diagram with the alphabet $\{1<\bar1<\dots<n<\bar n<0\}$ and the following rules:
\begin{enumerate}
\item Rows are weakly increasing.
\item Columns weakly increase, with strictly increasing nonzero entries.
\item No row contains multiple 0s.
\item In row i, all entries are greater than or equal to i.
\end{enumerate}
We define the weight of a particular tableaux as the following:
\begin{equation*}
\prod_{i=1}^{n}x^{w(i)-w(\bar i)}
\end{equation*}
where $w(i)$ and $w(\bar i)$ are the number of $i$ and $\bar i$ in the Young tableaux. Sundaram shows that
\begin{equation*}
s_{(\lambda)}^{so}=\sum_{T\in SSYT}\prod_{i=1}^{n}x^{w(i)-w(\bar i)}
\end{equation*}
where SSYT is the set of Young diagrams filled according to the above rules, and $s_{(\lambda)}^{so}$ is the character of the irreducible representation of $SO(2n+1)$ indexed by $\lambda$.
\subsection{Branching Rules}
These tableaux rules derive from branching rules of the irreducible representations of $SO(2n+1, \CC)$, which we shall explore in the following section.
\subsubsection{Relation to Sp(2n) Tableaux}
Sundaram's tableaux for $SO(2n+1,\CC)$ is a result of the relationship:
\begin{equation}\label{HorizontalStripEquation}
s_{\lambda}^{so} = \sum_{\mu\subseteq\lambda}s_{\mu}^{sp}
\end{equation}
where $\mu = (\mu_{1}\geq\mu_{2}\geq\dots\mu_{n-1}\geq\mu_{n}>0)$, $s_{\mu}^{sp}$ is the character of the irreducible representation of $Sp(2n)$ indexed by $\mu$, and $\mu\subseteq\lambda$ if and only if $\lambda_{i}-\mu_{i}\leq 1$ for all $i$ (they differ by a horizontal strip). Further, we can construct tableaux for $Sp(2n)$ using the exact same rules as the $SO(2n+1,\mathbb{C})$ tableaux, excluding 0 in the alphabet, as described by King \cite{hamel_symplectic_2002}. Thus, it remains to understand the motivation for the tableaux rules for $Sp(2n)$.
\subsubsection{Restriction of Sp(2n) Representations}
In 1962, Zhelobenko showed the restriction $Sp(2n)\downarrow Sp(2n-2)\times U(1)$ is as follows \cite{zhelobenko_classical_1962}:
\begin{equation*}
(\lambda)\downarrow=\sum_{x,y}(\lambda/x\cdot y)\{x-y\}
\end{equation*}
where x and y are one-partitions, and / and $\cdot$ are Schur function quotients and products determined by the Littlewood-Richardson rule, which is found in \cite{classical_lie_groups}. For our purposes $\lambda /z$ means removing $z$ boxes, no two from the same column. Further, $x$ means the number of removals of $n$ and $y$ refers to the number of removals of $\bar n$. The tableaux construction from King arises because the formula above implies that the $n^{th}$ row in $T_{\lambda}$ has a value other than $n$ or $\bar n$, then that tableaux vanishes in the restriction. Hence, the fourth condition in our tableaux rules is referred to as the symplectic condition on page 3160 in \cite{classical_lie_groups}. From that restriction relation, Wallach and Yacobi gave an explicit branching formula. Further, Howe, Lavicka, Lee, and Soucek describe this in terms of Young diagrams, and use it in order to produce a Skew Pieri rule for $Sp(2n)$ \cite{howe_roger}. 

\subsection{Gelfand-Tsetlin-Type Patterns}\label{Sundaram-GT}
We define a new type of Gelfand-Tsetlin pattern that we will use to construct ice models. The patterns have the following shape:
\begin{equation*}
\begin{array}{ccccccccccccc}
& a_{1,1} && a_{1,2} && \cdots && a_{1,n} && 0&&&  \\
&& a_{2,1} && a_{2,2} && \cdots && a_{2,n}&&&& \\
&&& a_{3,1} && a_{3,2} && \cdots && a_{3,n} &&& \\
&&&&&&\cdots&&&&&&\\ 
&&&&&&a_{2n-2,1} && a_{2n-2,2}&&&&\\
&&&&&&& a_{2n-1,1}&& a_{2n-1,2}&&&\\
&&&&&&&& a_{2n, 1} &&&&\\
&&&&&&&&& a_{2n+1, 1} &&&
\end{array}
\end{equation*}
and they following the same interleaving rules as regular Gelfand-Tsetlin patterns.
\subsection{Shifted Tableaux and Strict Gelfand-Tsetlin-Type Patterns}\label{shiftedStrictSundaram}
\par In this case, our shifted tableaux rules are the following:
\begin{enumerate}
	\item Rows are weakly increasing.
	\item Columns are weakly increasing.
	\item Diagonals are strictly increasing.
	\item No row contains consecutive 0s.
	\item The first entry in row $i$ is either $i$ or $\bar i$.
\end{enumerate}

\par For us, we only want strict Gelfand-Tsetlin-type patterns in order to construct valid corresponding ice models. In the Sundaram case, in order to create a bijection with the shifted tableaux, our "strict" Gelfand-Tsetlin-type patterns will include the following constraints: 
\begin{enumerate}
    \item $a_{i,j-1}>a_{i,j}>a_{i,j+1}\geq 0$
	\item $a_{2k,n-k+1}\neq 0$
	\item $a_{1,k}-a_{2,k}\leq 1$
\end{enumerate}
The first condition simply means that each row in our pattern is strictly increasing, which allows us to create an ice model out of these patterns. The second condition is precisely equivalent to condition 5 of our shifted tableaux. The third condition is equivalent to the fourth condition of our shifted tableaux. Notice that the top row of our pattern is exactly equivalent to $\lambda + \rho$ and we add a 0 at the end for convenience.\\
\begin{lemma}
For a given partition $\lambda$, we form the shifted tableaux of shape $\lambda+\rho$, and we let our top row of our Gelfand-Tsetlin pattern be equivalent to $\lambda + \rho$ with a 0 at the end. Then, the permissible fillings of the shifted tableaux are in bijection with the Gelfand-Tsetlin-like patterns.
\end{lemma}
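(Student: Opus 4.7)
The plan is to construct the bijection via the classical ``successive-shapes'' strategy, adapted to Sundaram's augmented alphabet. Enumerate the alphabet in order as $\ell_1 = 1 < \ell_2 = \bar 1 < \ell_3 = 2 < \cdots < \ell_{2n} = \bar n < \ell_{2n+1} = 0$. For any shifted tableau $T$ of shape $\lambda+\rho$ and any letter $\ell$, let $T^{\leq \ell}$ denote the subtableau of cells whose entries are $\leq \ell$; since rows of $T$ are weakly increasing, $T^{\leq \ell}$ is determined uniquely by its row-prefix-length vector. The forward map $\phi$ sends $T$ to the pattern whose $k$-th row records the prefix-length vector of $T^{\leq \ell_{2n+2-k}}$, so that the top row is the full shape $\lambda+\rho$ padded by a trailing $0$, and the bottom row records the cells of $T$ labeled by $1$.

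I first check that $\phi(T)$ satisfies the interleaving and three strict conditions. Passing from $T^{\leq \ell_{i+1}}$ to $T^{\leq \ell_i}$ removes the cells of letter $\ell_{i+1}$, which form a horizontal strip: no two lie in the same column, using the strict-column rule on nonzero entries (condition 2 of the tableau) and the observation that in a $\rho$-shifted shape trailing $0$s occupy pairwise distinct columns because row lengths strictly decrease. This horizontal-strip property yields the interleaving inequalities $a_{k-1,j} \geq a_{k,j} \geq a_{k-1,j+1}$. The row-length pattern of the GT diagram (rows $2k$ and $2k+1$ of length $n-k+1$) matches because condition 5 of the tableau forces rows $n-k+2, \ldots, n$ of $T$ to be empty in $T^{\leq \overline{n-k+1}}$. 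The strict decrease $a_{k,j} > a_{k,j+1}$ comes from combining diagonal-strictness with column-weak monotonicity: if the prefix of row $j$ has length $p$, then either $p$ already exceeds the length of row $j+1$ in $T$ (so the row-$(j+1)$ prefix has length at most $(\lambda+\rho)_{j+1} < p$), or the cell $(j+1,j+p)$ exists and column-weak increase gives $T(j+1,j+p) \geq T(j,j+p) > \ell$, forcing the prefix of row $j+1$ in $T^{\leq \ell}$ to be strictly shorter than $p$.

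The two extra strict-pattern conditions match the two special shifted-tableau conditions. Condition 2 ($a_{2k, n-k+1} \neq 0$) says row $n-k+1$ of $T^{\leq \overline{n-k+1}}$ is nonempty, which by weakly increasing rows means the first entry of row $n-k+1$ of $T$ lies in $\{n-k+1, \overline{n-k+1}\}$; this is precisely condition 5. Condition 3 ($a_{1,k} - a_{2,k} \leq 1$) counts the number of $0$s in row $k$ of $T$ (the difference between the full shape and the shape after deletion of $0$-cells), and since $0$ is the largest letter any two $0$s in a single row are automatically consecutive; thus ``at most one $0$ per row'' is equivalent to condition 4.

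The inverse map $\psi$ rebuilds $T$ letter by letter: the cells holding $\ell_k$ are precisely those in the set difference between the shapes encoded by rows $2n+3-k$ and $2n+2-k$ of the pattern. The interleaving and strictness conditions are then exactly what is needed to guarantee weakly increasing rows, weakly increasing columns (strict on nonzero entries), strictly increasing diagonals, and the two special tableau conditions; a routine check shows $\phi \circ \psi$ and $\psi \circ \phi$ are identities. The main obstacle is the strict-decrease verification within rows of the pattern: diagonal-strictness alone suffices only when the prefix ends with an entry equal to $\ell$, and the complementary case must be handled by combining column-weak monotonicity with the strict row-length drop $(\lambda+\rho)_j > (\lambda+\rho)_{j+1}$ that is built into the shifted shape.
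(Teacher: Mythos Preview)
Your approach is the same successive-shapes bijection that the paper sketches, and most of your verification is carried out more carefully than the paper's own argument. However, your justification of the interleaving inequality $a_{k,j}\ge a_{k-1,j+1}$ contains a genuine gap.

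You argue that the cells carrying a fixed letter $\ell_{i+1}$ form a horizontal strip, citing ``the strict-column rule on nonzero entries (condition~2 of the tableau).'' But condition~2 of the \emph{shifted} tableau rules in Section~\ref{shiftedStrictSundaram} is only ``columns are weakly increasing''; the strict-on-nonzero clause belongs to the unshifted Sundaram tableaux of Section~3.1 and is not among the shifted rules you are working with. In fact letters need not form horizontal strips. For $n=2$ and shifted shape $(3,2)$, the filling
\[
\begin{array}{ccc}
1 & \bar 2 & 0\\
  & \bar 2 & 0
\end{array}
\]
satisfies all five shifted-tableau conditions, yet $\bar 2$ occurs twice in column~$2$ and $0$ occurs twice in column~$3$. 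This example also refutes your auxiliary claim that trailing $0$'s occupy pairwise distinct columns: the last cell of row $j$ sits in column $j+(\lambda+\rho)_j-1$, and consecutive such values coincide whenever $(\lambda+\rho)_j=(\lambda+\rho)_{j+1}+1$.

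The repair is to derive interleaving from diagonal-strictness directly, not via horizontal strips. If $q=a_{k-1,j+1}$ then the $q$-th cell of row $j{+}1$, namely $(j{+}1,\,j{+}q)$, has entry at most $\ell_{2n+3-k}$; condition~3 forces the diagonally adjacent cell $(j,\,j{+}q{-}1)$ to have strictly smaller entry, hence entry at most $\ell_{2n+2-k}$, so the row-$j$ prefix in $T^{\le\ell_{2n+2-k}}$ has length at least $q$, i.e.\ $a_{k,j}\ge q$. With this correction in place your argument is complete and matches the paper's: both establish the bijection by reading row~$k$ of the pattern as the shape of $T^{\le\ell_{2n+2-k}}$, obtain column-weakness from strict row decrease, obtain diagonal-strictness from interleaving, and match the two extra strict-pattern constraints with shifted-tableau conditions~4 and~5.
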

\begin{proof}
We propose a reversible algorithm for this process. \\
\par First, let row $2n+1$ of our Gelfand-Tsetlin type pattern correspond to filling the tableaux with 1, let row 2$n$ correspond to filling with $\bar 1$, and in general, let row $2k+1$ correspond to n-k+1 and let row 2$k$ correspond to $\overline{n-k+1}$ for $k\geq 1$. Finally, let row $1$ of the pattern correspond to filling with 0. \\
\par Beginning from the bottom row of the Gelfand-Tsetlin type pattern, we fill out the shifted tableaux using the numbers just described, as done in the $GL(n,\mathbb{C})$ case. It is easy to see that Constraint 2 of the strict Gelfand-Tsetlin type patterns implies that either $i$ or $\bar i$ will be on the main diagonal of row $i$ in the tableaux. This algorithm also clearly forces our rows to be weakly increasing. Further, because the rows of the Gelfand-Tsetlin pattern are strictly decreasing, our corresponding columns will be weakly increasing. Further, condition 4 ensures that we do not add two 0s to any row in the shifted tableaux. Additionally, all diagonals are strictly increasing increasing because of our strict decreasing conditions in the GT model.\\
\par We claim that this algorithm is reversible. In particular, we can form a GT pattern from the bottom up by looking at the filled shifted tableaux. The conditions are clear from this process. It is also clear this algorithm produces a unique result in either direction. Thus, we have a bijection.
\end{proof}

\subsection{Sundaram Tableaux and Ice Models}\label{TableauxIceModelsSundaram}
We now consider ice models of a different shape. In particular if $\lambda=(\lambda_{n}>\cdots>\lambda_{1}\geq 1)$ is strict, then we have $\lambda_{n}$ columns and $2n+1$ rows. We also have the following conditions:
\begin{enumerate}
    \item The rightmost arrow of the first row always points out.
    \item Further, the rows labeled with $i$ and $\bar i$ are connected by a loop on the right hand side. They are either connected by an A or B loop. 
\end{enumerate}

\begin{center}
\begin{tikzpicture}[scale=1]
\node[anchor = west, inner sep = 3pt] at (4.5, 1) {$A$};
\node[anchor = west, inner sep = 3pt]  at (8.5,1) {$B$};
\draw[thick, arrowMe=stealth] (4.5,1) to [out=-90, in=0] (4, 0.5);
\draw[thick, arrowMe=stealth](4, 1.5) to [out=0, in=90](4.5,1);
\draw[thick, arrowMe=stealth] (8.5,1) to [out=90, in=0] (8,1.5);
\draw[thick, arrowMe=stealth] (8,.5) to [out=0, in=-90] (8.5,1);
\foreach \nd in {4.5, 8.5}{
	\fill (\nd, 1) circle (2pt);}
\end{tikzpicture}
\end{center}

\begin{ex}
\par Let $\lambda=(3,2)$, and consider the following Gefland-Tsetlin-Type pattern:
\begin{figure}[H]\centering
\begin{equation*}
\begin{array}{ccccccccccccc}
& 3 && 2 && 0 &&&& \\ 
&& 3 && 1&&&&&&&& \\ 
&&& 2 && 1&&&&&&& \\
&&&& 2&&&&&&&&\\ 
&&&&& 1&&&&&&&\\ 
\end{array}
\end{equation*} 
\end{figure}
 Then, Figure \ref{sundaram-with-bd} shows an ice model with boundary arrows. Figure \ref{sundaram-with-GT} shows the ice model with the boundary conditions and the Gelfand-Tsetlin-Type entries filled in. Finally, Figure \ref{sundaram-filled} shows the fully filled-in ice model.

\begin{figure}[H]\centering 
\minipage[b]{0.3\textwidth}\centering
\begin{tikzpicture}
\foreach \x in {0,1,2}{
	\draw[thick] (\x, 0) -- (\x, 6);}
\foreach \y in {1,2,3,4,5}{
	\draw[thick] (-1,\y) -- (2.5,\y);}
\foreach \top in {4,2}{
	\draw[thick] (2.5, \top) to [out=0, in=90] (3, \top - 0.5) to [out=-90, in=0](2.5, \top - 1);
	\fill (3, \top - 0.5) circle (2pt);}
\foreach \x/\y in {0/1,1/1,2/1}{
	\draw[arrowMe=stealth] (\x, \y) -- (\x, \y-1);}
\foreach \x/\y in {-1/1,-1/2,-1/3,-1/4,-1/5,2/5}{
	\draw[arrowMe=stealth] (\x, \y) -- (\x+1, \y);}
	\node [label=right:$2$] at (3, 3) {};
	\node [label=right:$\bar{1}$] at (3, 2) {};
	\node [label=right:$1$] at (3, 1) {};
	\node [label=right:$\bar{2}$] at (3, 4) {};
	\node [label=right:$0$] at (3, 5) {};
	\node [label=right:$3$] at (-0.5, 6.5) {};
	\node [label=right:$2$] at (0.5, 6.5) {};
	\node [label=right:$1$] at (1.5, 6.5) {};
\end{tikzpicture}\caption{}\label{sundaram-with-bd}
\endminipage
\minipage[b]{.3\textwidth}\centering
\begin{tikzpicture}
\foreach \x in {0,1,2}{
	\draw[thick] (\x, 0) -- (\x, 6);}
\foreach \y in {1,2,3,4,5}{
	\draw[thick] (-1,\y) -- (2.5,\y);}
\foreach \top in {4,2}{
	\draw[thick] (2.5, \top) to [out=0, in=90] (3, \top - 0.5) to [out=-90, in=0](2.5, \top - 1);
	\fill (3, \top - 0.5) circle (2pt);}
\foreach \x/\y in {0/1,1/1,2/1,2/6,1/5,0/4,0/3,2/3,0/2,1/2}{
	\draw[arrowMe=stealth] (\x, \y) -- (\x, \y-1);}
\foreach \x/\y in {0/5,1/5,0/4,2/4,1/3,2/3,1/2,2/1} {
    \draw[arrowMe=stealth] (\x, \y) --(\x, \y+1);}
\foreach \x/\y in {-1/1,-1/2,-1/3,-1/4,-1/5,2/5}{
	\draw[arrowMe=stealth] (\x, \y) -- (\x+1, \y);}
	\node [label=right:$2$] at (3, 3) {};
	\node [label=right:$\bar{1}$] at (3, 2) {};
	\node [label=right:$1$] at (3, 1) {};
	\node [label=right:$\bar{2}$] at (3, 4) {};
	\node [label=right:$0$] at (3, 5) {};
	\node [label=right:$3$] at (-0.5, 6.5) {};
	\node [label=right:$2$] at (0.5, 6.5) {};
	\node [label=right:$1$] at (1.5, 6.5) {};
\end{tikzpicture}\caption{}\label{sundaram-with-GT}
\endminipage
\minipage[b]{.3\textwidth}\centering
\begin{tikzpicture}
\foreach \x in {0,1,2}{
	\draw[thick] (\x, 0) -- (\x, 6);}
\foreach \y in {1,2,3,4,5}{
	\draw[thick] (-1,\y) -- (2.5,\y);}
\foreach \top in {4,2}{
	\draw[thick] (2.5, \top) to [out=0, in=90] (3, \top - 0.5) to [out=-90, in=0](2.5, \top - 1);
	\fill (3, \top - 0.5) circle (2pt);}
\foreach \x/\y in {0/1,1/1,2/1,2/6,1/5,0/4,0/3,2/3,0/2,1/2}{
	\draw[arrowMe=stealth] (\x, \y) -- (\x, \y-1);}
\foreach \x/\y in {0/5,1/5,0/4,2/4,1/3,2/3,1/2,2/1} {
    \draw[arrowMe=stealth] (\x, \y) --(\x, \y+1);}
\foreach \x/\y in {-1/1,-1/2,-1/3,-1/4,-1/5,2/5,0/5,1/4,0/3,1/3,0/2,0/1,1/1}{
	\draw[arrowMe=stealth] (\x, \y) -- (\x+1, \y);}
\foreach \x/\y in {2/5,1/4,2/2} {
    \draw[arrowMe=stealth] (\x, \y) --(\x-1, \y);}
\foreach \x/\y in {2.5/4,2.5/2}{
	\draw[arrowMe=stealth] (\x, \y) to [out=0, in=90](\x + 0.5, \y - 0.5);
	\draw[arrowMe=stealth] (\x+0.5, \y - 0.5) to [out=-90, in=0](\x, \y - 1);}
	\node [label=right:$2$] at (3, 3) {};
	\node [label=right:$\bar{1}$] at (3, 2) {};
	\node [label=right:$1$] at (3, 1) {};
	\node [label=right:$\bar{2}$] at (3, 4) {};
	\node [label=right:$0$] at (3, 5) {};
	\node [label=right:$3$] at (-0.5, 6.5) {};
	\node [label=right:$2$] at (0.5, 6.5) {};
	\node [label=right:$1$] at (1.5, 6.5) {};
\end{tikzpicture}\caption{}\label{sundaram-filled}
\endminipage
\end{figure}
\end{ex}

\begin{lemma}
If $a_{2k+1,n-k+1}=0$, then the loop on the right boundary indexed by $n-k+1$ and $\overline{n-k+1}$ goes counterclockwise. If not, then the loop goes clockwise. Further, in row $0$, the arrow points out.
\end{lemma}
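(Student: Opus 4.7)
The plan is to exploit the fact that every admissible vertex in the ice model has exactly two incoming and two outgoing arrows, so the total arrow flux into any single row must balance the total flux out. Writing out this balance carefully for one row yields a clean identity relating the up-arrow counts on the two horizontal lines bounding that row, from which the loop type and the top arrow direction can be read off directly.

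First, I would introduce the counting function $U_y$: the number of up-pointing vertical edges on the horizontal line directly above the row at height $y$. Each row has the same number $C$ of vertices, and a routine tally of the flux at its four boundaries (the vertical edges above and below the row, together with the horizontal edges at its two ends) yields the identity: the right-end arrow of the row points out precisely when $U_y=U_{y-1}$, and points in precisely when $U_y=U_{y-1}+1$. Here we use the boundary condition that the left-end arrow of every row points in.

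Next I would translate $U_y$ into GT data: under the natural bijection between strict GT-type patterns and ice models described above, $U_y$ equals the number of \emph{nonzero} entries of the GT row assigned to that horizontal line, since the phantom $0$ at the end of a GT row does not correspond to an actual up arrow. By condition 2 of the strict GT-type patterns, the last entry of every even-indexed row is nonzero, and strict decreasing then forces every entry in those rows to be nonzero. For the loop attached to ice rows $\overline{n-k+1}$ (upper, tied to GT row $2k$) and $n-k+1$ (lower, tied to GT row $2k+1$), the three $U$-counts on the three successive horizontal lines are therefore $n-k+1$, then either $n-k+1$ or $n-k$ (according to whether $a_{2k+1,n-k+1}$ is nonzero or zero), and then $n-k$. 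Plugging these into the identity yields: if $a_{2k+1,n-k+1}\neq 0$ the upper right arrow is out and the lower is in (an $A$ loop, clockwise), while if $a_{2k+1,n-k+1}=0$ the upper is in and the lower out (a $B$ loop, counterclockwise).

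Finally, for the top row labeled $0$ I would apply the same identity: the top boundary contributes $n$ up-arrows (the $n$ positive parts of $\lambda+\rho$), and GT row $2$ also has $n$ nonzero entries by the above reasoning. Hence the two relevant counts both equal $n$, so the difference is $0$ and the right arrow of the top row must point out. The one potential pitfall is really just bookkeeping: matching GT row $i$ to the correct horizontal line in the ice, and reading off which of $A$ and $B$ corresponds to clockwise vs.\ counterclockwise from the arrow conventions in the diagrams; the underlying argument is a single flow-conservation count.
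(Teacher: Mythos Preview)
Your argument is correct, and it takes a genuinely different route from the paper's proof. The paper argues locally: it introduces a phantom column labeled $0$, analyzes the single vertex at the right end of the top row to force the outgoing arrow there, and then invokes Lemma~2 of Brubaker--Bump--Friedberg to conclude that the rightmost arrows below the top row must alternate, finishing with a case check at the vertex adjacent to each bend. Your argument is instead a global row-by-row conservation count: summing the $2$-in/$2$-out condition over all vertices in a single row gives the clean identity $[\text{right arrow in}] = U_y - U_{y-1}$, and then you read $U$ directly off the GT pattern as the number of nonzero entries in the corresponding row. This makes the proof entirely self-contained (no appeal to the external BBF lemma, which your identity essentially reproves) and arguably more transparent, since the dichotomy on $a_{2k+1,n-k+1}$ is exactly the dichotomy on whether the middle $U$-count is $n-k+1$ or $n-k$. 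The paper's approach has the virtue of being more concrete at the level of individual vertices, which fits the local flavor of the later arguments about $NE$ states; yours has the virtue of handling all three claims (both bend orientations and the top row) uniformly with one identity.

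The only places to be careful are exactly the ones you flag: matching GT row $2k$, $2k+1$, $2k+2$ to the three consecutive horizontal lines bounding the bend (you have this right: $\overline{n-k+1}$ sits above $n-k+1$, with GT rows $2k$, $2k+1$, $2k+2$ giving the three $U$-counts $n-k+1$, $n-k+1$ or $n-k$, $n-k$), and reading off that ``upper out, lower in'' is the clockwise $A$-bend while ``upper in, lower out'' is the counterclockwise $B$-bend. Both checks go through as you describe.
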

\begin{proof}
We begin by proving the latter statement. Suppose instead of loops on the right side of our ice model, we create a new model where we include a column labeled 0. Let $(x, y)$ denote the vertex of the ice model in column $x$ and row $y$. First, because of our convention that the first row of the Gelfand-Tsetlin type pattern ends in $0$, the the vertical arrow above the vertex $(0, 1)$ points up. Because of the second constraint on strict Gelfand-Tsetlin type patterns, the vertical arrow below vertex $(0, 1)$ points out. Thus, vertex $(0, 1)$ has two edges pointed out, and the other two arrows must point towards the vertex. Hence, in row 0, the far right arrow points out.\\
\par We now prove the first statement of the lemma. We make use of Lemma 2 in Section 4 from a paper by Brubaker, Bump, and Friedberg \cite{brubaker_schur_2011}. From this lemma, we get that, excluding the first row, the rightmost arrows (in the $0$ column) must alternate in our model. Now we need to determine the edge above the $(2(n-k+1)+1,0)$ vertex. Because of constraint $2$ on strict Gelfand-Tsetlin type patterns, the edge below this vertex points away. By the edge to the right of the vertex points in. Thus, if $a_{2k+1,n-k+1}=0$ in our GT pattern, then the arrow pointing above the $(2(n-k+1)+1,0)$ vertex points up, and hence, the edge to the left of the $(2(n-k+1)+1,0)$ vertex points towards it. Further, the edge to the left of the $(2(n-k+1),0)$ vertex points away from the vertex. Hence, our loop will be counterclockwise. The argument uses the same reasoning if $a_{2k+1,n-k+1}\neq0$.
\end{proof}
We consider the state of a vertex to be either $\{NE,NS,NW,EW,$ $SE,SW\}$, where the state is determined by which arrows pointed in towards the vertex. In particular, if the top edge points in and the right edge points in at a vertex, then we label it with $NE$.\\ 
\begin{lemma}
No vertex in the top row has a state of $NE$ if and only if the fourth condition of the Gelfand-Tsetlin Patterns is satisfied.
\end{lemma}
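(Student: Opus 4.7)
The plan is to translate ``state NE at column $c$'' into purely arithmetic conditions on rows $1$ and $2$ of the pattern and then exploit interleaving. Let $A=\{a_{1,1},\dots,a_{1,n}\}$ and $B=\{a_{2,1},\dots,a_{2,n}\}$. By the bijection between strict patterns and ice, the top-row vertex in column $c$ has its north edge pointing out of the vertex iff $c\in A$, and its south edge pointing into the vertex iff $c\in B$. State NE requires N,E to be ``in'' and S,W to be ``out'', so NE at $c$ forces $c\notin A$, $c\notin B$, and the horizontal edge immediately to the left of the vertex to point leftward.

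First I would carry out a horizontal propagation analysis along the top row. Case-checking the six admissible vertex types shows that only $(a(c),b(c))=(1,0)$ (state EW) and $(0,1)$ (state NS) flip the direction of the horizontal arrow; the other configurations preserve it. The left boundary contributes a rightward arrow, so a short induction gives: the horizontal edge immediately left of column $c$ points leftward if and only if
\[
P(c):=|\{j : a_{1,j}>c\}|+|\{j : a_{2,j}>c\}|
\]
is odd. The right boundary condition ``rightmost arrow of the first row points out'' is consistent with this, since $|A\triangle B|=2n-2|A\cap B|$ is even. Consequently, some vertex in the top row is NE if and only if there exists $c\ge 1$ with $c\notin A\cup B$ and $P(c)$ odd.

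Next I would use the interleaving $a_{1,k}\ge a_{2,k}\ge a_{1,k+1}$ to interpret $P(c)$. The combined sequence $a_{1,1}\ge a_{2,1}\ge a_{1,2}\ge a_{2,2}\ge\dots\ge a_{1,n}\ge a_{2,n}$ is weakly decreasing, so for $c\notin A\cup B$ the entries strictly greater than $c$ form an initial segment whose length is odd precisely when it ends at some $a_{1,k}$, i.e., when some index $k$ satisfies $a_{1,k}>c>a_{2,k}$ (the right inequality is strict since $c\notin B$). Thus NE appears somewhere in the top row if and only if there exist $k$ and an integer $c\notin A\cup B$ with $a_{1,k}>c>a_{2,k}$.

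Both directions of the lemma now follow. If $a_{1,k}-a_{2,k}\le 1$ holds for every $k$, then no integer can lie strictly between $a_{2,k}$ and $a_{1,k}$, ruling out any NE. Conversely, if $a_{1,k}-a_{2,k}\ge 2$ for some $k$, then any integer $c\in[a_{2,k}+1,a_{1,k}-1]$ works: interleaving forces $c\notin A$ (since $a_{1,j}>a_{1,k}>c$ for $j<k$ and $a_{1,j}\le a_{1,k+1}\le a_{2,k}<c$ for $j>k$) and $c\notin B$ (since $a_{2,j}\le a_{2,k}<c$ for $j\ge k$ and $a_{2,j}\ge a_{1,j+1}\ge a_{1,k}>c$ for $j<k$), producing NE at $c$. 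The main obstacle will be the propagation bookkeeping --- verifying, case by case, which of the six vertex types flip or preserve the horizontal arrow direction and that the left and right boundary conventions are compatible with the resulting parity. Once this parity statement is established, the rest reduces to a short count in the combined weakly decreasing sequence.
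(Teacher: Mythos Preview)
Your argument is correct and takes a genuinely different route from the paper's.  The paper argues locally: it takes a putative leftmost $NE$ in the top row, looks at its left neighbour, rules out $EW$ there by directly invoking $a_{1,k}-a_{2,k}\le 1$, is forced into a run of $SE$'s, and then counts up-arrows above versus below that run to obtain a contradiction; the converse is handled by a similar adjacency argument.  By contrast, you set up a global parity invariant for the horizontal arrow along the top row, observe that only $EW$ and $NS$ (exactly the columns in $A\triangle B$) flip it, and then read off that an $NE$ in column $c$ is equivalent to $c\notin A\cup B$ together with $P(c)$ odd.  The interleaving $a_{1,k}\ge a_{2,k}\ge a_{1,k+1}$ then converts oddness of $P(c)$ into the existence of an integer strictly between some $a_{2,k}$ and $a_{1,k}$, which is precisely the failure of $a_{1,k}-a_{2,k}\le 1$.

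What each approach buys: the paper's walk-leftward argument is entirely local and needs no auxiliary lemma, but the case analysis is somewhat delicate (one has to track the transition from $NE$ to $SE$'s to an eventual $EW$ and match indices carefully).  Your parity formulation is cleaner and makes both directions fall out of the same calculation; it also isolates a reusable fact (horizontal direction at the left of column $c$ is governed by the parity of $|A_{>c}|+|B_{>c}|$) that would apply to any ice row sandwiched between two pattern rows.  The only places that deserve a line of explicit justification in a final write-up are the six-vertex case check that exactly $EW$ and $NS$ flip the horizontal arrow, and the observation that the chosen $c\in[a_{2,k}+1,\,a_{1,k}-1]$ actually lies in the column range of the model (it does, since $c\le a_{1,k}-1<a_{1,1}$ and $c\ge a_{2,k}+1\ge 1$, the latter using that $a_{2,n}\ne 0$).
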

\begin{proof}
First, suppose, toward a contradiction, that there exists a vertex with a $NE$ state in the top row. In particular, this cannot be the top left vertex because the top left vertex always points from the West. Further, let us assume this is the left most $NE$ vertex. Thus, the next vertex over must have top edge pointing away from the vertex. Now, if this is an $EW$ state, then we have a contradiction, since the arrow pointing up on the top edge corresponds to an element $\lambda_{k}$ of our initial partition. Further, condition 4 on our GT patterns imply that either $\lambda_{k}$ or $\lambda_{k}-1$ must appear in our second row of the GT pattern. Thus, the vertex next to $NE$ must be a $SE$ vertex. \\
\par Suppose the next vertex is an $EW$ vertex. Again we have a contradiction, because we have $\lambda_{k}+1$ and $\lambda_{k}$ appearing in our initial partition. Thus, at least two of $\lambda_{k}+1$, $\lambda_{k}$, and $\lambda_{k}-1$ must appear in the second row, but this is not the case. Thus, we must have another $SE$ vertex. Thus suppose we have $j$ $SE$ vertices to the left of the $NE$ vertex, and because of the construction, eventually we must get a vertex with $EW$ vertex as the left side of our model points in. In this case, we have j+1 consecutive arrows pointing out on the top edges. However, we only have j arrows pointing up from the vertices of the second row. Thus, there exists an element in our GT pattern such that $a_{1,k}-a_{2,k}\nleq 1$. Then this vertex must have an $EW$ configuration and the vertex to its right must be $EW$.\\ 
\par For the other direction, suppose, toward a contradiction, that the fourth rule is violated. Let $\alpha_1,...,\alpha_n$ denote the spots where the arrows point up in a row and $\beta_1,...,\beta_m$ denote the spots where the arrows point up in the row below it. Then there exists some $\alpha_i$ and $\beta_i$ such that $\alpha_i>\beta_{i}+1$. So we have the situation where the $EW$ and $NE$ configurations are forced and we obtain the desired contradiction.
\end{proof}

\section{Deformation Formula for the Sundaram Case}\label{DefFormSundarm}
We are now going to use the results from Section \ref{SundaramSection} in order to prove a Tokuyama-like formula for $SO(2n+1,\mathbb{C})$ coming from our new ice models. 
\subsection{Tokuyama-like formula for $SO(2n+1,\mathbb{C})$}\label{MainTheorem}
In \cite{gray_metaplectic_2017}, Gray creates ice models for the $Sp(2n,\mathbb{C}$, which look like our ice models without the top row. We use the weights as he defines them which we show in Figure \ref{BWDeltGamIce}. In \cite{ivanov_thesis} Ivanov uses ice models with an alternating boundary on the left, which are in bijection with the Gray Ice Models.\\
\par We now use$\lambda= (\lambda_{n}\geq\lambda_{n-1},\geq\dots,\geq \lambda_{1}>0)$ and $\rho=(n,n-1,\dots,2,1)$. Further, let us create our ice models for $(\lambda+\rho)$. Let our Boltzmann weights be those defined in Figures \ref{BoltzmannWeights} and \ref{BendWeights}. Next, let $\mathcal{Z}(\lambda)$ be the partition function defined on our ice states with top row $(\lambda+\rho)$. Let $C^{*}=\mathbf{z}^{-\rho}\prod_{i=1}^{n}(1+tz_{i}^{2})\prod_{i<j}(1+tz_{i}z_{j})(1+z_{i}z_{j}^{-1})$, where $\mathbf{z}^{-\rho}=z_{1}^{-n}z_{2}^{-n+1}\dots z_{n}^{-1}$ (this formula is a deformation of Cartan type C and can be found in \cite{brubaker_six-vertex_2015}). Let $s_{\lambda}^{so}$ be the character of the irreducible representation of $SO(2n+1)$ indexed by $\lambda$. Then we obtain our main result:\\ 
\begin{thm}\label{CdeformationtimesBcharacter}
Using the Boltzmann weights provided below, the partition function on the ice models is a deformation of the character:
\begin{equation*}
\mathcal{Z}(\lambda)=C^{*}s_{\lambda}^{so}
\end{equation*}
\end{thm}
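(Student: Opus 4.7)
The plan is to exploit Sundaram's branching rule $s_\lambda^{so} = \sum_{\mu \subseteq \lambda} s_\mu^{sp}$ from equation \eqref{HorizontalStripEquation} by decomposing the ice model horizontally along the top boundary. First I would classify admissible top-row states: by the lemma just proved, no top-row vertex is in state $NE$, the top edges are up-arrows in exactly the columns of $\lambda + \rho$, and the leftmost horizontal arrow points in. Walking left to right, these constraints force every horizontal arrow in the top row, leaving as the only freedom the orientation of each vertical edge immediately below a vertex whose top edge is up. The set of columns where those bottom edges point up determines a partition $\mu$ with $\mu_k \in \{\lambda_k, \lambda_k - 1\}$, i.e.\ a horizontal strip removal $\mu \subseteq \lambda$.

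Once a top-row configuration is fixed, the bottom $2n$ rows inherit precisely the boundary data of the Gray/Ivanov symplectic ice model for the partition $\mu$, whose partition function evaluates by \cite{gray_metaplectic_2017, ivanov_thesis} to $C_{sp}(\mathbf{z},t)\, s_\mu^{sp}(\mathbf{z})$, where $C_{sp}$ is the type-$C$ deformation factor. Summing over top-row choices gives
\begin{equation*}
\mathcal{Z}(\lambda) \;=\; C_{sp}(\mathbf{z},t) \sum_{\mu \subseteq \lambda} W_{\mathrm{top}}(\lambda,\mu)\, s_\mu^{sp}(\mathbf{z}),
\end{equation*}
where $W_{\mathrm{top}}(\lambda,\mu)$ is the product of Boltzmann weights in the forced top-row configuration. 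The key algebraic identity to establish is that $W_{\mathrm{top}}(\lambda,\mu)$ equals $C^*(\mathbf{z},t)/C_{sp}(\mathbf{z},t)$ \emph{independently of} $\mu$; once that constancy is in hand, the factor pulls out of the sum and \eqref{HorizontalStripEquation} yields $\mathcal{Z}(\lambda) = C^* s_\lambda^{so}$.

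I expect the main obstacle to be proving $\mu$-independence of $W_{\mathrm{top}}$. Since the ``$0$'' row carries no spectral parameter of its own, all $z_i$-dependence in $W_{\mathrm{top}}$ must come from monomial weights at $NS$ and $SE$ vertices, so one must track column by column which of the five admissible vertex types appears and show, for instance by induction on the size of the horizontal strip, that toggling a single $\mu_k$ redistributes vertex types in a weight-preserving way. A secondary subtlety is that the $\mathbf{z}^{-\rho}$ normalization in $C^*$ must arise cleanly from the column-index shift between the fixed top boundary $\lambda+\rho$ and the variable intermediate boundary $\mu + \rho$; I would check this by a direct power-of-$z$ count across the row, and also verify that the loop-orientation lemma (A vs.\ B bends determined by $a_{2k,n-k+1}$) is respected so that no admissible state is missed or double-counted when passing from top-row data to $\mu$.
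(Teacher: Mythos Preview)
Your decomposition is exactly the paper's: peel off the ``$0$'' row, recognize the remaining $2n$ rows as a Gray/Ivanov symplectic ice model with top boundary $\mu+\rho$, invoke their evaluation, and sum over horizontal strips $\mu\subseteq\lambda$ via \eqref{HorizontalStripEquation}. Where you diverge from the paper is in overcomplicating two bookkeeping points.

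First, in this paper $C^{*}$ \emph{is} the Gray/Ivanov type-$C$ factor: the proof opens by quoting their result in the form $\mathcal Z(\mu)=C^{*}\, s_{\mu}^{sp}$, so there is no separate $C_{sp}$, and the ratio $C^{*}/C_{sp}$ you propose to compute is identically~$1$. Second, the Boltzmann weights assigned to the ``$0$'' row are all equal to~$1$ (this row corresponds to the letter $0$ in Sundaram's alphabet, which contributes weight~$1$ to any tableau); the paper records this as ``every top row has a product equal to~$1$.'' With these two facts your $W_{\mathrm{top}}(\lambda,\mu)=1$ for every $\mu$, and both your anticipated main obstacle (a column-by-column toggling argument for $\mu$-independence) and your secondary subtlety (extracting $\mathbf z^{-\rho}$ from the top row) evaporate. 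The factor $\mathbf z^{-\rho}$ already lives inside $C^{*}$ and comes entirely from the Gray/Ivanov evaluation of the lower $2n$ rows; nothing in the top row carries any $z_i$ or~$t$. The only residual technicality the paper does address, and which you omit, is the column-count mismatch when the largest part of $\mu$ drops by one: the extra leftmost column then consists entirely of $NW$ vertices of weight~$1$, so the discrepancy is harmless.
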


\begin{figure}[!ht]
\begin{center}

\begin{tabular}
{|c|c|c|c|c|c|}
\hline

$\begin{tikzpicture}
\node [label=left:$ j$] at (0.5,1) {};
\node [label=right:$ $] at (1.5,1) {};
\node [label=above:$ $] at (1,1.5) {};
\node [label=below:$ $] at (1,0.5) {};
\draw [>->] (0.5,1) -- (1.5,1);
\draw [>->] (1,1.5) -- (1,0.5);
\end{tikzpicture}$
&
$\begin{tikzpicture}
\node [label=left:$j$] at (0.5,1) {};
\node [label=right:$ $] at (1.5,1) {};
\node [label=above:$ $] at (1,1.5) {};
\node [label=below:$ $] at (1,0.5) {};
\draw [<-<] (0.5,1) -- (1.5,1);
\draw [<-<] (1,1.5) -- (1,0.5);
\end{tikzpicture}$
&
$\begin{tikzpicture}
\node [label=left:$j$] at (0.5,1) {};
\node [label=right:$ $] at (1.5,1) {};
\node [label=above:$ $] at (1,1.5) {};
\node [label=below:$ $] at (1,0.5) {};
\draw [>->] (0.5,1) -- (1.5,1);
\draw [<-<] (1,1.5) -- (1,0.5);
\end{tikzpicture}$
&
$\begin{tikzpicture}
\node [label=left:$j$] at (0.5,1) {};
\node [label=right:$ $] at (1.5,1) {};
\node [label=above:$ $] at (1,1.5) {};
\node [label=below:$ $] at (1,0.5) {};
\draw [<-<] (0.5,1) -- (1.5,1);
\draw [>->] (1,1.5) -- (1,0.5);
\end{tikzpicture}$
&
$\begin{tikzpicture}
\node [label=left:$j$] at (0.5,1) {};
\node [label=right:$ $] at (1.5,1) {};
\node [label=above:$ $] at (1,1.5) {};
\node [label=below:$ $] at (1,0.5) {};
\draw [<->] (0.5,1) -- (1.5,1);
\draw [>-<] (1,1.5) -- (1,0.5);
\end{tikzpicture}$
&
$\begin{tikzpicture}
\node [label=left:$j$] at (0.5,1) {};
\node [label=right:$ $] at (1.5,1) {};
\node [label=above:$ $] at (1,1.5) {};
\node [label=below:$ $] at (1,0.5) {};
\draw [>-<] (0.5,1) -- (1.5,1);
\draw [<->] (1,1.5) -- (1,0.5);
\end{tikzpicture}$

$\Delta$ Ice
\\
\hline

$1$&$tz_i$&$1$&$z_i$&$z_i(t+1)$&$1$\\ \hline

\hline

$\begin{tikzpicture}
\node [label=left:$j$] at (0.5,1) {};
\node [label=right:$ $] at (1.5,1) {};
\node [label=above:$ $] at (1,1.5) {};
\node [label=below:$ $] at (1,0.5) {};
\draw [>->] (0.5,1) -- (1.5,1);
\draw [>->] (1,1.5) -- (1,0.5);
\end{tikzpicture}$
&
$\begin{tikzpicture}
\node [label=left:$j$] at (0.5,1) {};
\node [label=right:$ $] at (1.5,1) {};
\node [label=above:$ $] at (1,1.5) {};
\node [label=below:$ $] at (1,0.5) {};
\draw [<-<] (0.5,1) -- (1.5,1);
\draw [<-<] (1,1.5) -- (1,0.5);
\end{tikzpicture}$
&
$\begin{tikzpicture}
\node [label=left:$j$] at (0.5,1) {};
\node [label=right:$ $] at (1.5,1) {};
\node [label=above:$ $] at (1,1.5) {};
\node [label=below:$ $] at (1,0.5) {};
\draw [>->] (0.5,1) -- (1.5,1);
\draw [<-<] (1,1.5) -- (1,0.5);
\end{tikzpicture}$
&
$\begin{tikzpicture}
\node [label=left:$j$] at (0.5,1) {};
\node [label=right:$ $] at (1.5,1) {};
\node [label=above:$ $] at (1,1.5) {};
\node [label=below:$ $] at (1,0.5) {};
\draw [<-<] (0.5,1) -- (1.5,1);
\draw [>->] (1,1.5) -- (1,0.5);
\end{tikzpicture}$
&
$\begin{tikzpicture}
\node [label=left:$j$] at (0.5,1) {};
\node [label=right:$ $] at (1.5,1) {};
\node [label=above:$ $] at (1,1.5) {};
\node [label=below:$ $] at (1,0.5) {};
\draw [<->] (0.5,1) -- (1.5,1);
\draw [>-<] (1,1.5) -- (1,0.5);
\end{tikzpicture}$
&
$\begin{tikzpicture}
\node [label=left:$j$] at (0.5,1) {};
\node [label=right:$ $] at (1.5,1) {};
\node [label=above:$ $] at (1,1.5) {};
\node [label=below:$ $] at (1,0.5) {};
\draw [>-<] (0.5,1) -- (1.5,1);
\draw [<->] (1,1.5) -- (1,0.5);
\end{tikzpicture}$

$\Gamma$ Ice
\\
\hline

$1$&$z_i^{-1}$&$t$&$z_i^{-1}$&$z_i^{-1}(t+1)$&$1$\\ \hline
\end{tabular}
\caption{Boltzmann weights for $\Delta$ and $\Gamma$ Sundaram Ice }
\label{BWDeltGamIce}
\end{center}

\begin{center}
\begin{tikzpicture}[scale=1]
\node[anchor = west, inner sep = 3pt] at (4.5, 1) {$z_i^{-1}$};
\node[anchor = west, inner sep = 3pt]  at (8.5,1) {$t$};

\draw[thick, arrowMe=stealth] (4.5,1) to [out=-90, in=0] (4, 0.5);
\draw[thick, arrowMe=stealth](4, 1.5) to [out=0, in=90](4.5,1);
\draw[thick, arrowMe=stealth] (8.5,1) to [out=90, in=0] (8,1.5);
\draw[thick, arrowMe=stealth] (8,.5) to [out=0, in=-90] (8.5,1);

\foreach \nd in {4.5, 8.5}{
	\fill (\nd, 1) circle (2pt);}

\end{tikzpicture}
\\
\caption{Boltzmann Weights for Sundaram Bends}
\label{BendWeights}
\end{center}
\end{figure}

\begin{proof}
Using Ivanov and Gray, for top row $(\lambda+\rho)$ in the U-turn ice models, we have the partition function $\mathcal{Z}(\lambda)=C^{*}s_{\lambda}^{sp}$, where $s_{\lambda}^{sp}$ is the character of the irreducible representation of $Sp(2n)$ indexed by $\lambda$. Now, in our ice models, every top row has a product equal to 1. Thus, eliminating this top row, we have an ice model exactly like those in Gray, where the top row is exactly the second row of our GT pattern. Thus, let $(\lambda^{'}+\rho)$ be the second row of our GT pattern. Note that $|(\lambda +\rho)|=|(\lambda^{'}+\rho)|$ by construction of our ice models. Also note that if $\lambda_{1}>\lambda_{1}^{'}$, the difference is exactly 1. Also, this implies every vertex in the first column of our ice models with top row $(\lambda^{'}+\rho)$ is NW, which is a weight of 1. Thus, for any given top row $(\lambda^{'}+\rho)$, we get $\mathcal{Z}(\lambda^{'})=C^{*}s_{\lambda^{'}}^{sp}$. Thus, if we consider all possible allowed second rows of our GT patterns, we get the following:
\begin{equation*}
    \mathcal{Z}(\lambda)=\sum_{\mu\subseteq\lambda}C^{*}s_{\mu}^{sp}
\end{equation*}
where $\mu\subseteq\lambda$ if it is an allowed second row in our GT model. However, as a result of Sundaram and our Equation \ref{HorizontalStripEquation}, 
\begin{equation*}
    \sum_{\mu\subseteq\lambda}s_{\mu}^{sp}=s_{\lambda}^{so}
\end{equation*}
Thus, in particular, we get $\mathcal{Z}(\lambda)=C^{*}s_{\lambda}^{so}$
\end{proof}
\par Now we are interested in a type B character deformation times a type B character. Through an adjustment of weights from Gray, we get the following.\\
\begin{cor}[Tokuyama-like Formula for $SO(2n+1,\mathbb{C})$]\label{MainCor}
There exists a set of Boltzmann weights for ice models in the Sundaram case such that the partition function defined from our ice models is precisely:
\begin{equation}
    \mathcal{Z}(\lambda)=\prod_{i=1}^{n}(1+tz_{i})\prod_{i<j}(1+tz_{i}z_{j})(1+z_{i}z_{j}^{-1})s_{\lambda}^{so}
\end{equation}
\end{cor}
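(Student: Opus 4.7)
The strategy is to define new Boltzmann weights by rescaling those of Figures \ref{BWDeltGamIce} and \ref{BendWeights} and then rerun the state-sum argument used in Theorem \ref{CdeformationtimesBcharacter}. Because the admissible states (and their bijection with Sundaram tableaux via the Gelfand-Tsetlin-like patterns of Section \ref{Sundaram-GT}) depend only on the boundary data $\lambda+\rho$ and not on the numerical values of the weights, replacing the weights with a compatible variant alters only the constant prefactor in front of $s_\lambda^{so}$. The factor $\prod_{i<j}(1+tz_iz_j)(1+z_iz_j^{-1})$ is common to $C^*$ and the target, so only the per-$i$ contributions need adjustment: we must replace $\mathbf{z}^{-\rho}\prod_i(1+tz_i^2)$ by $\prod_i(1+tz_i)$.

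Concretely, I would trace through the proof of Theorem \ref{CdeformationtimesBcharacter} and isolate how the factor $\mathbf{z}^{-\rho}\prod_i(1+tz_i^2)$ in $C^*$ splits as a product of row-pair contributions: for each $i$, the two rows indexed by $i$ and $\bar i$ together with the A/B bend between them combine to give a factor of the form $z_i^{-(n-i+1)}(1+tz_i^2)$. Modifying the bend weights in Figure \ref{BendWeights} so that the A-bend contributes $1$ and the B-bend contributes $tz_i$ (instead of $z_i^{-1}$ and $t$) replaces $(1+tz_i^2)$ by $(1+tz_i)$. Additionally rescaling the $\Gamma$-row vertex weights of Figure \ref{BWDeltGamIce} by an appropriate monomial in $z_i$ absorbs the leftover $z_i^{-(n-i+1)}$, removing $\mathbf{z}^{-\rho}$. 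Gray's analysis in \cite{gray_metaplectic_2017} is flexible enough to accommodate such a one-parameter rebalancing of the weights between bend and row, so the corresponding $Sp(2n)$ identity continues to hold with the new prefactor, and then the Sundaram branching identity \eqref{HorizontalStripEquation} used in the proof of Theorem \ref{CdeformationtimesBcharacter} produces the target $SO(2n+1,\mathbb{C})$ character times the new prefactor.

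The main obstacle is ensuring that the $z_i$-rescaling of the row weights is local and consistent, i.e., that each new weight depends only on the vertex type and row index (not on the particular state) and that the total multiplicative adjustment per admissible state is the \emph{same} for every admissible state. This reduces to a counting argument: in any fixed row, the six vertex types appear in counts determined by how many of the row's top and bottom edge arrows point up, and these counts are determined by the adjacent rows of the Gelfand-Tsetlin-like pattern, which in turn are bounded in a uniform way by the boundary $\lambda+\rho$. Once one checks that the $z_i$-exponent contributed by the rescaling is constant across admissible states (i.e.\ equal to $n-i+1$ in every state), the new weights indeed shift the prefactor from $C^*$ to $\prod_i(1+tz_i)\prod_{i<j}(1+tz_iz_j)(1+z_iz_j^{-1})$, and invoking Theorem \ref{CdeformationtimesBcharacter} with these weights yields the corollary directly.
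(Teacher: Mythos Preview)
Your high-level strategy---modify the Boltzmann weights so that the partition function acquires a different prefactor while the character $s_\lambda^{so}$ remains unchanged---is correct and is what the paper does. But your implementation overcomplicates the matter and contains a gap.

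The gap is in your claim that setting the $A$-bend to $1$ and the $B$-bend to $tz_i$ ``replaces $(1+tz_i^2)$ by $(1+tz_i)$.'' That modification simply multiplies both bend weights by $z_i$; since exactly one of $A_i,B_i$ occurs in each admissible state, this only multiplies the partition function by $\prod_i z_i$, leaving the $(1+tz_i^2)$ factors untouched. The factor $(1+tz_i^2)$ is not the sum $A_i+B_i$; it emerges from the full Ivanov/Gray evaluation, so adjusting the bends by a monomial cannot convert it into $(1+tz_i)$. Your fallback---rerunning Gray's argument with rescaled $\Gamma$-row weights and hoping it is ``flexible enough''---would require redoing that analysis and resolving the counting obstacle you flag, which you do not actually carry out.

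The paper sidesteps all of this. It observes that in every admissible state exactly one of $A_i$ or $B_i$ appears, so multiplying \emph{both} bend weights $A_i,B_i$ by the \emph{same} factor
\[
f_i \;=\; z_i^{\,n-i+1}\,\frac{1+tz_i}{1+tz_i^2}
\]
uniformly multiplies the entire partition function by $\prod_i f_i$. There is no need to touch the row weights, no counting argument, and no appeal to the flexibility of Gray's proof: one uses Theorem~\ref{CdeformationtimesBcharacter} as a black box and then rescales. The lesson is that the bend weights need not be monomials; allowing the rational factor $\frac{1+tz_i}{1+tz_i^2}$ into the bend is exactly what kills the unwanted $(1+tz_i^2)$ and introduces $(1+tz_i)$ in one stroke.
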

\begin{proof}
Consider Theorem \ref{CdeformationtimesBcharacter}. Using the weights in Figures \ref{BoltzmannWeights} and \ref{BendWeights}, we make the following modification to the bend weights: let $A_{i}$ and $B_{i}$ be the clockwise and counterclockwise oriented U-turn boundary connecting row $i$ and $\bar{i}$, respectively. We note that either $A_{i}$ or $B_{i}$ occurs in every ice state. Thus, let us multiply our weights for $A_{i}$ and $B_{i}$ by $ z_{i}^{-n+i-1}\dfrac{(1+tz_{i})}{(1+tz_{i}^{2})}$. Thus, this means for a given ice state, we are multiplying the weight of the model by $\mathbf{z}^{-\rho}\dfrac{\prod_{i=1}^{n}(1+tz_{i})}{\prod_{i=1}^{n}(1+tz_{i}^{2})}$. Since this happens for all our ice states, we are simply multiplying our partition function that we got from Theorem \ref{CdeformationtimesBcharacter}, by $\mathbf{z}^{-\rho}\dfrac{\prod_{i=1}^{n}(1+tz_{i})}{\prod_{i=1}^{n}(1+tz_{i}^{2})}$, giving the result.  
\end{proof}
In particular, this is a type B deformation as described in \cite{brubaker_six-vertex_2015}, which was our main goal.
\section{Koike-Terada Tableaux}\label{KTSection}
We now consider tableaux for $(SO(2n+1,\mathbb{C})$ that come from different branching rules due to Koike-Terada \cite{koike_young_1990}. We create GT patterns in bijection with tableaux, create strict GT patterns in bijection with shifted tableaux, and then form new ice models in bijection with our strict GT patterns.
\subsection{Construction}
\par Another set of tableaux rules for the $SO(2n+1)$ group is defined by Koike and Terada in \cite{koike_young_1990}.\\  Given a partition $\lambda = (\lambda_n \geq \lambda_{n - 1} \geq \cdots \geq \lambda_1 \geq 0)$, we fill a standard Young tableau of shape $\lambda$ with the alphabet $\{1 < \bar{1} < \barr{1} < 2 < \bar{2} < \barr{2} \cdots n < \bar{n} < \barr{n}\}$. Let $T_{i, j}$ be the entry of the tableau in the $i$-th row and the $j$-th column. The filling of the tableau must satisfy the following conditions:
\begin{enumerate}
    \item $k$ can only appear in $T_{k, 1}$
    \item Rows are weakly increasing
    \item Columns are strictly increasing
    \item $T_{i, j} \geq i$
\end{enumerate}
Given a tableau filled according to these rules, the weight of the tableau is consequently defined.\\
\begin{defn}
Let $T$ be a tableau of shape $\lambda$. The \textbf{weight} of $T$ is the vector of integers $wt(T) = (d_1, d_2, \dots , d_n)$, where $d_i = w(\sbar i) - w(\barr i)$ denotes the difference between the number of $\sbar i$'s and $\barr i$'s in $T$.
\end{defn}
We let $\mathbf{z}^{wt(T)} = z_1^{d_1}z_2^{d_2}\cdots z_n^{d_n}$. Then, summing over all tableaux of shape $\lambda$ yields the character formula.

\subsection{Young-Diagrammatic Description of Branching Rules}
\par Koike and Terada gave a description of the restriction rules of representations of $SO(2n+1, \CC)$ using these tableaux. Instead of using the Gelfand-Tsetlin bases of representation spaces as in Zhelobenko \cite{zhelobenko_classical_1962}, the tableaux rules given above directly determine the weights and multiplicities in an irreducible representation of the special orthogonal group. \\
Let $\lambda$ be a partition. The length of $\lambda$ is the number of non-zero terms and is denoted by $l(\lambda)$. A subpartition $\lambda \supset \mu = (\mu_n, \mu_{n-1}, \dots, \mu_n-m + 1)$ is a partition of length $m\le n$ such that $\mu_i\le \lambda_i$, for all $n-m\le i\le n$. In other words, the Young diagram of $\lambda$ contains the diagram of $\mu$. The skew diagram is the set-theoretic difference $\theta=\lambda-\mu$. \cite{macdonald_symmetric_2015}\\ 
Under the restriction rule $SO(2n+1)\downarrow SO(2n-1) \times GL(1)$, the character of the irreducible representation parametrized by $\lambda$ is related to the tableaux as follows:
\begin{multline}\label{KT-prop4.2}
\chi_{\lambda_{SO(2n+1)}}\downarrow SO(2n-2k+1)\times \overbrace{GL(1)\times \cdots \times GL(1)}^{k\text{ times}}= \sum_{\substack{\mu \subset \lambda, l(\mu)\le n-k\\T_{\lambda-\mu}}}\chi_{\mu_{SO(2n-2k+1)}}\cdot z^{wt(T)},
\end{multline}
where the summation runs over all possible fillings of skew tableaux of $\lambda-\mu$. \\
In lieu of proving Equation \ref{KT-prop4.2}, which can be done inductively, we look at the case of $k=1$, i.e. branching down one level. Each skew tableau gives rise to a monomial in $z_{n}$, hence the character of a representation of $GL(1)$. The tableaux of shape $\lambda$ thus illustrate how a polynomial representation of $SO(2n+1)$ decomposes as a direct sum of irreducible representations of its subgroups upon restriction to $SO(2n-1)\times GL(1)$. Although this description of restriction rules is independent of Gelfand-Tsetlin bases, we develop an analogue of such patterns in the next section.

\subsection{Gelfand-Tsetlin-Type Patterns}\label{KT-GT}
We developed Gelfand-Tsetlin-type patterns in bijection with the Koike-Terada tableaux. Let $a_{i, j}$ be the entry of the pattern in the $i$-th row and the $j$-th column. Given a partition  $\lambda = (\lambda_n \geq \lambda_{n - 1} \geq \cdots \geq \lambda_1 \geq 0)$, we create a pattern with top row $\lambda$ satisfying the following rules:
\begin{enumerate}
    \item The pattern has $3n$ rows. We will label these rows $1, \bar{1}, \barr{1}, \cdots, n, \bar{n}, \barr{n}$, starting from the bottom of the pattern.
    \item Rows $i, \bar{i},$ and $\barr{i}$ must have $i$ entries that weakly decrease across the row.
    \item Each entry $b$ must be in the interval $[a, c]$, where $a$ is the entry above and to the right of $b$, and $c$ is the entry above and to the left of $b$.
    \item Row $i$ must end in a $1$ or a $0$ (for $i \in \{1, \cdots, n\}$)
    \item Each entry in row $\barr{i}$ (for $\barr{i} \in \{\barr{1}, \cdots, \barr{n - 1}\}$) must be left-leaning.
\end{enumerate}
Rules $4$ and $5$ account for the first tableaux rule.\\
The correspondence between the Koike-Terada tableuax and Gelfand-Tsetlin-type patterns works in the same way as in the Sundaram case.
\begin{ex}
The Gelfand Tsetlin-type pattern
\begin{equation*}
\begin{array}{ccccccccccccc}
& 5 && 3&&&&&&&&&  \\
&& 4 && 2&&&&&&&& \\
&&& 2 && 1&&&&&&& \\
&&&& 2&&&&&&&&\\ 
&&&&& 1&&&&&&&\\
&&&&&& 0 &&&&&&
\end{array}
\end{equation*}
corresponds with the tableau
\begin{equation*}
\begin{ytableau}
    \sbar{1} & \barr{1} & \sbar{2} & \sbar{2} & \barr{2}\\
    \text{\small{\(2\)}} & \sbar{2} & \barr{2}
\end{ytableau}
\end{equation*}
\end{ex}

\subsection{Shifted Tableaux and Strict Gelfand-Tsetlin-type patterns}
Just like in the previous section, we want to to define a bijection between shifted tableaux and strict Gelfand-Tsetlin-type patterns to construct corresponding ice models. We refer to a Gelfand-Tsetlin-type pattern as strict if each row is strictly increasing from right to left. Then the shifted tableaux rules are defined as follows:

\begin{enumerate}
    \item Rows are weakly increasing.
	\item Columns are weakly increasing.
	\item Diagonals are strictly increasing.
	\item The first entry in row $k$ is $k$, $\overline{k}$ or $\overline{\overline{k}}$.
\end{enumerate}

We note that violation of the fifth rule would result in the following configuration in the Gelfand-Tsetlin-type pattern (thus contradicting its strictness):
\begin{equation*}
\begin{array}{ccccccccccccc}
c && 0 && 0&&&&&&&&\\
& c && 0&&&&&&&&& \\
&& c && 0&&&&&&&& \\
&&& c && 0&&&&&&& \\
&&&& c&&&&&&&&\\ 
&&&&& b&&&&&&&\\
&&&&&& a &&&&&&
\end{array}
\end{equation*}

\subsection{Ice Model}\label{KTICEsection}
The next step in the characterization of Koike-Terada tableaux is to construct an ice model in bijection with strict patterns. \\
\par We define our ice model to be a grid with $3n$ horizontal and $\lambda_n$ vertical lines. The horizontal lines are labeled from $1$ to $\overline{\overline{n}}$ starting from the bottom and the vertical lines are labeled $1$ to $\lambda_{n}$ starting from the right. The shape and boundary conditions are given as follows:

\par We allow the three possible "bends" on the right boundary of each model connecting the rows labeled $\overline{\overline{k}}$ and $\overline{k}$ for each $k\in\{1,\cdots, n\}$: \\
\begin{center}
\begin{tikzpicture}[scale=1]
\node[anchor = west, inner sep = 3pt] at (4.5, 1) {$A$};
\node[anchor = west, inner sep = 3pt]  at (8.5,1) {$B$};
\node[anchor = west, inner sep = 3pt]  at (12.5,1) {$C$};
\draw[thick, arrowMe=stealth] (4.5,1) to [out=-90, in=0] (4, 0.5);
\draw[thick, arrowMe=stealth](4, 1.5) to [out=0, in=90](4.5,1);
\draw[thick, arrowMe=stealth] (8.5,1) to [out=90, in=0] (8,1.5);
\draw[thick, arrowMe=stealth] (8,.5) to [out=0, in=-90] (8.5,1);
\draw[thick, arrowMe=stealth] (12,.5) to [out=0, in=-90] (12.5,1);
\draw[thick, arrowMe=stealth] (12,1.5) to [out=0, in=90] (12.5,1);
\foreach \nd in {4.5, 8.5, 12.5}{
	\fill (\nd, 1) circle (2pt);}

\end{tikzpicture}
\end{center}

\par Note that the loop of the type \begin{tikzpicture}[scale=.5] \draw [thin,->]
	(4.5,0) arc (0:-90:.5);
\draw [thin,<-]
	(4,0.5) arc (90:0:.5); \coordinate (a) at (4.5,0);
\fill (a) circle (2pt); \end{tikzpicture} is not possible, as it would require having two consecutive $NS$ configurations in the same column, which clearly can't happen. We also allow the three possible configurations called "ties" on the right boundary of each row labeled $k\in\{1,\cdots, n\}$: \\
\begin{center}
    
\begin{tikzpicture}[line width=0.5mm]

\coordinate (a) at (1,2);
\fill (a) circle (2pt);
\coordinate (b) at (5,2);
\fill (b) circle (2pt);
\coordinate (c) at (9,2);
\fill (c) circle (2pt);

\node [label=right:$U$] at (1,2) {};
\node [label=right:$D$] at (5,2) {};
\node [label=right:$O$] at (9,2) {};

\draw [thick,arrowMe=stealth] (0,2) -- (1,2);
\draw [thick, arrowMe=stealth] (1,1) -- (1,2);
\draw [thick,arrowMe=stealth] (1,2) -- (1,3);

\draw [thick,arrowMe=stealth] (4,2) -- (5,2);
\draw [thick,arrowMe=stealth] (5,2) -- (5,1);
\draw [thick,arrowMe=stealth] (5,3) -- (5,2);

\draw [thick,arrowMe=stealth] (8,2) -- (9,2);
\draw [thick,arrowMe=stealth] (9,2) -- (9,1);
\draw [thick,arrowMe=stealth] (9,2) -- (9,3);

\end{tikzpicture}
\end{center}

\par Furthermore, every row labeled $k\in\{1,\cdots, n\}$ is a three-vertex model: the only vertex configurations are SW, NW, and NE.\\

\par The next three figures demonstrate boundary conditions and the full ice model for one of the patterns of $\lambda=(2,1)$.

\minipage[htb]{0.3\textwidth}\centering 
\begin{figure}[H]\centering 
\begin{tikzpicture}
\foreach \x in {0,1}{
	\draw[thick] (\x, -1) -- (\x, 6);}
\foreach \y in {1,2,4,5}{
	\draw[thick] (-1,\y) -- (1.5,\y);}
\foreach \y in {0,3}{
	\draw[thick] (-1, \y) -- (1, \y);
	\fill (1, \y) circle (2pt);}
\foreach \top in {5, 2}{
	\draw[thick] (1.5, \top) to [out=0, in=90] (2, \top - 0.5) to [out=-90, in=0](1.5, \top - 1);
	\fill (2, \top - 0.5) circle (2pt);}
\foreach \x/\y in {0/0,1/0}{
	\draw[arrowMe=stealth] (\x, \y) -- (\x, \y-1);}
\foreach \x/\y in {-1/0,-1/1,-1/2,-1/3,-1/4,-1/5,0/0,0/3}{
	\draw[arrowMe=stealth] (\x, \y) -- (\x+1, \y);}

	\node [label=right:$\barr{1}$] at (2, 2) {};
	\node [label=right:$\bar{1}$] at (2, 1) {};
	\node [label=right:$1$] at (2, 0) {};
	\node [label=right:$2$] at (2, 3) {};
	\node [label=right:$\bar{2}$] at (2, 4) {};
	\node [label=right:$\barr{2}$] at (2, 5) {};

	\node [label=right:$2$] at (-0.5, 6.5) {};
	\node [label=right:$1$] at (0.5, 6.5) {};

\end{tikzpicture}
    \caption{Boundary}
\end{figure}
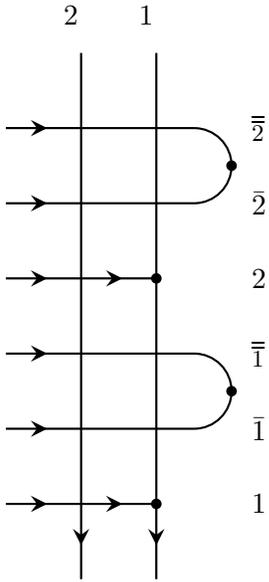
\endminipage 
\minipage[htb]{0.3\textwidth}\centering
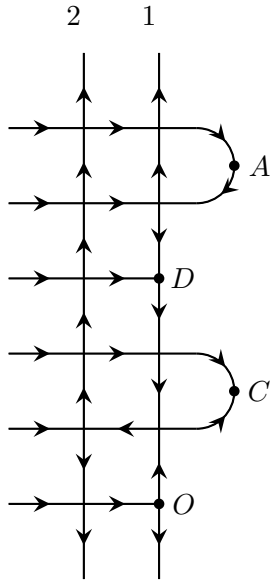
\begin{figure}[H]\centering
 \begin{tikzpicture}
\foreach \x in {0,1}{
	\draw[thick] (\x, -1) -- (\x, 6);}
\foreach \y in {1,2,4,5}{
	\draw[thick] (-1,\y) -- (1.5,\y);}
\foreach \y in {0,3}{
	\draw[thick] (-1, \y) -- (1, \y);
	\fill (1, \y) circle (2pt);}
\foreach \top in {5, 2}{
	\draw[thick] (1.5, \top) to [out=0, in=90] (2, \top - 0.5) to [out=-90, in=0](1.5, \top - 1);
	\fill (2, \top - 0.5) circle (2pt);}
\foreach \x/\y in {1.5/5}{
	\draw[arrowMe=stealth] (\x, \y) to [out=0, in=90](\x + 0.5, \y - 0.5);
	\draw[arrowMe=stealth] (\x+0.5, \y - 0.5) to [out=-90, in=0](\x, \y - 1);}
\foreach \x/\y in {2/1.5}{
	\draw[arrowMe=stealth] (\x - 0.5, \y + 0.5) to [out=0, in=90](\x, \y);
	\draw[arrowMe=stealth] (\x-0.5, \y - 0.5) to [out=0, in=-90](\x, \y);}
\foreach \x/\y in {0/1,0/2,0/3,0/4,0/5,1/4,1/5,1/0}{
	\draw[arrowMe=stealth] (\x, \y) -- (\x, \y+1);}
\foreach \x/\y in {0/1,0/0,1/0,1/2,1/3,1/4}{
	\draw[arrowMe=stealth] (\x, \y) -- (\x, \y-1);}
\foreach \x/\y in {-1/0,-1/1,-1/2,-1/3,-1/4,-1/5,0/0,0/3, 0/4,0/5,0/2}{
	\draw[arrowMe=stealth] (\x, \y) -- (\x+1, \y);}
\foreach \x/\y in {1/1}{
	\draw[arrowMe=stealth] (\x, \y) -- (\x-1, \y);}
\node [label=left:$A$] at (2.75,4.5) {};
\node [label=left:$D$] at (1.75,3) {};
\node [label=left:$C$] at (2.75,1.5) {};
\node [label=left:$O$] at (1.75,0) {};
	
	\node [label=right:$2$] at (-0.5, 6.5) {};
	\node [label=right:$1$] at (0.5, 6.5) {};

\end{tikzpicture}
\caption{Ice Model}
\end{figure}
\endminipage \hfill
\minipage[htb]{.3\textwidth}\centering

\begin{figure}[H]
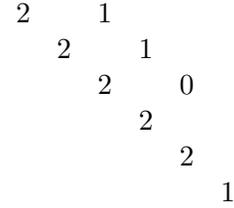
\centering
\begin{equation*}
\begin{array}{ccccccccccccc}

&&&&&&&&&&&&\\
&&&&&&&&&&&&\\
&&&&&&&&&&&&\\
&&&&&&&&&&&&\\
&&&&&&&&&&&&\\
& 2 && 1 &&&&&&&&& \\ 
&& 2 && 1&&&&&&&& \\ 
&&& 2 && 0&&&&&&& \\
&&&& 2&&&&&&&&\\ 
&&&&& 2&&&&&&&\\ 
&&&&&& 1 &&&&&& \\
&&&&&&&&&&&&\\
&&&&&&&&&&&&\\
&&&&&&&&&&&&
\end{array}
\end{equation*} 

\caption{GT pattern}\end{figure}
\endminipage\hfill 

\par We prove that such ice models are in bijection with strict Gelfand-Tsetlin-like patterns in the next two theorems. We note that the first three Gelfand-Tsetlin-type pattern rules are automatically satisfied as in previous sections. It remains to prove the bijection between the strict Gelfand-Tsetlin-type patterns and the ice, which we will do with the following theorem:\\
\begin{thm} \label{3vertexmodel}
The following are equivalent:
\begin{enumerate}
    \item Koike-Terada Gelfand-Tsetlin-type pattern rules 4 and 5 are satisfied.
    \item Each ice row labeled $k\in\{1,\cdots, n\}$ has no $NS$, $SE$, or $EW$ configurations, and tie boundary conditions are satsified.
\end{enumerate}
\end{thm}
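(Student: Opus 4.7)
The plan is to translate the local ice constraints on row $k$ into constraints on the entries of GT rows $k$ and $\barr{k-1}$ via the standard ice--pattern correspondence (upward top edges in ice row $\ell$ correspond to the entries of GT row $\ell$), and then match these against rules 4 and 5.

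First, I would classify the six vertex configurations by their top/bottom edge directions: NW and NE have top down, bottom down; SW and SE have top up, bottom up; NS has top down, bottom up; EW has top up, bottom down. Thus forbidding NS and EW is equivalent to requiring that top and bottom edges agree in direction at each regular vertex of row $k$. To also rule out SE, I would invoke a horizontal-flow argument: each of the three allowed ties U, D, O has its west edge pointing into the tie (visible directly from the figures), and the left boundary of the row points inward, so the horizontal flow is rightward at both ends of row $k$; since NS and EW are the only configurations that can flip horizontal flow, excluding them forces rightward flow throughout row $k$, ruling out NE and hence (within our restricted set) SE as well. The upshot is that regular vertices in row $k$ must be NW or SW, so top direction equals bottom direction at every regular vertex.

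Via the bijection, this agreement translates to the statement that GT rows $k$ and $\barr{k-1}$ have identical entries at every column $c \geq 2$, while the tie at column $1$ accounts for the discrepancy: tie U contributes column $1$ to both rows, tie D to neither, and tie O to row $k$ alone. For $(2) \Rightarrow (1)$, GT row $k$ has one more entry than GT row $\barr{k-1}$, so the extra entry is either the tie-O contribution $1$ or a trailing zero padding, giving $a_{k,k} \in \{0,1\}$ (rule 4); the first $k-1$ entries of row $k$ then coincide position-wise in decreasing order with the entries of row $\barr{k-1}$, which is rule 5. For $(1) \Rightarrow (2)$, the same argument runs in reverse: rules 4 and 5 yield the column-$\geq 2$ agreement between the two rows, forcing each regular vertex to be NW or SW (excluding NS, SE, EW), while the value of $a_{k,k}$ together with whether $1$ appears in row $\barr{k-1}$ pins down the tie uniquely as U, D, or O.

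The case $k = 1$ needs separate handling, since there is no row $\barr{0}$ and only rule 4 applies; here the bottom edges of row 1 are part of the outer boundary (all pointing outward), so every regular vertex has bottom down and must be NW, with the tie equal to D or O according as $a_{1,1} = 0$ or $1$. I expect the main obstacle to be the flow-conservation step: one has to verify case-by-case that each tie's west edge points inward and that NS and EW are genuinely the only flow-reversing configurations in the six-vertex model, after which the remainder of the proof is bookkeeping about matching GT entries across the tie.
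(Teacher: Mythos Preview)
Your proposal is correct and amounts to a tidier reorganization of the paper's own proof rather than a fundamentally different method. The paper treats the three forbidden configurations one at a time: it rules out $NS$ directly from rule~5, rules out $SE$ by tracing leftward from a hypothetical $SE$ vertex through $NE$/$SE$ vertices until hitting an $EW$ and deriving a contradiction with rule~5, and rules out $EW$ by observing it could only sit in column~$0$ or~$1$; the tie conditions are then handled by a further case split. Your approach compresses all of this into one step by using the horizontal-flow observation to show that (2) is equivalent to ``all regular vertices are $NW$ or $SW$ together with a valid tie,'' after which the match with rules~4 and~5 is immediate entry bookkeeping. The paper's leftward trace from $SE$ to $EW$ is exactly your flow argument run backwards, so the core mechanism is shared; you have simply promoted it to the organizing principle.

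One small expositional point: in paragraph two you invoke \emph{both} the left boundary and the tie's inward west edge to pin the flow as rightward. That is fine for the direction $(2)\Rightarrow(1)$, where the tie is part of the hypothesis, but in the direction $(1)\Rightarrow(2)$ the tie type is what you are trying to derive, so you should run the flow argument from the left boundary alone (which suffices, since once $NS$ and $EW$ are excluded the flow propagates rightward from the left edge regardless of what happens at the tie). Your paragraph three already does this correctly (``rules 4 and 5 yield the column-$\geq 2$ agreement \ldots\ forcing each regular vertex to be $NW$ or $SW$ \ldots\ while the value of $a_{k,k}$ \ldots\ pins down the tie''), so this is only a matter of not overstating the inputs in paragraph two. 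You should also note, when pinning down the tie in $(1)\Rightarrow(2)$, that the fourth west-inward trivalent configuration (all three edges pointing in) is excluded because rule~5 forces $1\in\text{row }\barr{k-1}\Rightarrow 1\in\text{row }k$; this is implicit in your three-case split but worth saying.
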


\begin{proof} We'll show the two directions:
\par First, suppose that rules 4 and 5 are satisfied. Then clearly $NS$ isn't possible in rows labeled $k$, since all the entries in row $\barr{k-1}$ are left-leaning. Now assume, toward a contradiction, that we have a $SE$ configuration in a row labeled $k$. Then the configuration to the left of it is either $SE$ again, or $NE$, or $EW$. If it is $SE$ or $NE$, we look at the configuration to the left of it. We may now assume we reached the leftmost $SE$ or $NE$ configuration, then the state to the left must be $EW$ because of the left boundary conditions. But this is a contradiction to rule 5, since we get a non-left-leaning entry, as the up-arrow in the $EW$ configuration is not the rightmost in that row.
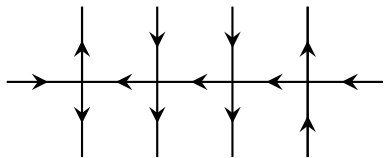
\begin{figure}[H]\centering 
\begin{tikzpicture}
\foreach \x in {1,2,3,4}{
	\draw[thick] (\x, 0) -- (\x, 2);}
\foreach \y in {1}{
	\draw[thick] (0,\y) -- (5,\y);}
\foreach \x/\y in {0/1}{
	\draw[arrowMe=stealth] (\x, \y) -- (\x+1, \y);}
\foreach \x/\y in {2/1, 3/1, 4/1, 5/1}{
	\draw[arrowMe=stealth] (\x, \y) -- (\x-1, \y);}
\foreach \x/\y in {2/2,3/2,2/1,3/1,1/1}{
	\draw[arrowMe=stealth] (\x, \y) -- (\x, \y-1);}
	\foreach \x/\y in {1/1, 4/1,4/0}{
	\draw[thick, arrowMe=stealth] (\x, \y) -- (\x, \y+1);}
\end{tikzpicture}
    \caption{SE forces EW}
\end{figure}
Now we note that rule 4 directly implies that we cannot have $EW$ in any row labeled $k\in\{1,...,n\}$. A Gelfand-Tsetlin-type pattern row ending in 0 or 1 implies that we could only have $EW$in the (imaginary) $\nth{0}$ or $\nth{1}$ row, and none of these are possible. \\ 
\par We will now show that the tie boundary conditions are also implied by rules 4 and 5. So, row $k$ must end in a $0$ or a $1$. If row $k$ ends in a $1$, then a $1$ cannot appear in row $\barr{k - 1}$ since each entry in row $\barr{k - 1}$ must be left-leaning and strictly decreasing. This would correspond to having an $EW$ in column $1$ of our ice model if we had a full column $1$, but now corresponds to the $O$-tie. If row $k$ ends in a $0$, then for similar reasoning we would have an $EW$ in the $\nth{0}$ column of our ice model, if we were to have a $\nth0$ column. As stated earlier, we know $SE$ and $NS$ cannot appear in this row, and it is clear that neither $EW$ nor $NE$ can appear directy to the left of an $EW$. Thus, the only possible fillings for the vertex in the $\nth{1}$ column would be a $SW$ or a $NW$. Having a $SW$ implies having the U tie boundary, and having a $NW$ implies the type D tie boundary.\\
\par First, suppose each non-bar row has no $NS$, $SE$, and $EW$. Then, the only vertices in the row can be $NW$, $SW$, and $NE$. Each of these types of vertices have the two vertical arrows pointing in the same direction, meaning the row below it is left leaning. Thus, rule 5 is satisfied. Next, suppose the tie boundary conditions are also satisfied. Then assume, toward a contradiction, that row $k\in\{1,\cdots,n\}$ ends in a number greater than 1. This means the only possible tie configuration for row $k$ would be D. If this is the case, all the horizontal arrows to the left of it must point right to avoid NS. However, since row $k$ in the Gelfand-Tsetlin-type pattern has one more entry than row $\barr{k-1}$, then eventually we will have a configuration where two consecutive vertical arrows in that row point away from each other, thus obtaining the desired contradiction, and rule 4 is satisfied.
\end{proof}

\section{Proctor Tableaux}\label{ProctorSection}
We now consider tableaux for $(SO(2n+1,\mathbb{C})$ that come from different branching rules due to Proctor \cite{proctor_young_1994}. We create GT patterns in bijection with tableaux. However, we show strict GT patterns do not exist naturally, and thus there is no natural correspondence with ice models.
\subsection{Construction}
We will examine one more tableaux associated to $SO(2n+1)$, as defined by Proctor.Let $\lambda=(\lambda_{n}\geq\lambda_{n-1}\geq\dots\geq\lambda_{1}\geq1)$ be a partition. We fill a standard Young Tableaux of shape $\lambda$ with alphabet and ordering $\{1<2<\dots<2n<0\}$, where 0 is infinity. We define $T_{i, j}$ to be the entry in a tableaux $T$ in row $i$ and column $j$. We fill the tableaux such that:
\begin{enumerate}
	\item Rows are weakly increasing.
	\item Columns are strictly increasing.
	\item Follows the $2c$ orthogonal condition
	\item Follows the $2m$ protection condition 
\end{enumerate}
\medskip
\begin{defn}
The \textbf{$2c$ orthogonal condition} is satisfied if for every $c \leq n$, there are less than or equal to $2c$ entries that are less than or equal to $2c$ in the first two columns, not including a 0 entry.
\end{defn}
\medskip
\begin{defn}
The \textbf{$2m$ protection condition} is satisfied if for every $m \leq n$:
\begin{enumerate}
    \item If an entry in the first column is equal to $2m-1$, then define i to be equal to the row number of said entry.
    \item Then, define $k$ to be equal to $2m-i$.
    \item If there is an entry in the $kth$ row equal to $2m$, let h be equal to the column number of the leftmost $2m$ entry.
    \item Then, for any $j$ such that $2 \leq j \leq h$, $T_{k, j}$ must be equal to $2m-1$. Additionally, $T_{k-1, h}$ must equal to 2m-1.
    \item If the suppositions do not hold for $m$, then the condition is trivially satisfied.
\end{enumerate}
\end{defn}
\par For example, if our tableaux looked like the following:
\begin{equation*}
\young(11x5,3x4,5)
\end{equation*}
Then to satisfy the $2m$ protection condition, every entry where there is an x (i.e. $T_{1,3}$ and $T_{2,2}$) must equal 3.
If our tableaux looked like the following:
\begin{equation*}
\young(1135,x34,5)
\end{equation*}
Then the entry $T_{2, 1}$ must be greater than or equal to 3 in order to satisfy the 2c orthogonal rule (and, in this case, it must be 3 so that the 2nd row is weakly increasing). If $T_{2,1}$ were equal to 2, then when $c=1$, there would be more than $2c$ entries in the first two columns that are less than or equal to $2c$. 
The following tableaux satisfies all of the rules of a valid Proctor tableaux:
\begin{equation*}
\young(1135,334,5)
\end{equation*}

\subsection{Branching Rules}

We have that the indexing set for these two point-wise equal sets of positive tensor orthogonal characters coincide for all n for $SO(2n+1)$ and $O_{2n+1}$. The rules for Proctor's tableaux are a consequence of the branching restriction $O_{2n+1}\downarrow O_{2n-1} \otimes O_{2}$ at each step, where the restriction to $O_{2n-1}$ gives us the Protection Condition and the restriction to $O_{2}$ gives us the Orthogonal Condition. Proctor gives the following proposition. \\ 

\begin{prop}[Proctor 10.3] Let $G_N = O_N$, and let $\lambda$ respectively be an N-orthogonal partition. Then the representation $G_N(\lambda)$ restricts to $\oplus G_{N-2}(v) \otimes G_2(\lambda - v)$, where the sum is over all $v \subseteq \lambda$ which are possible indexing partitions for representations of $G_{N-2}$ such that $\lambda - v$ does not have more than 2 boxes in a column.

\end{prop}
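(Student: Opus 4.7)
The plan is to verify the decomposition by matching characters on both sides, using the classical single-step branching rule $O_N \downarrow O_{N-1}$ (due to Zhelobenko and Boerner) together with Littlewood's character identities for the orthogonal groups. Since $O_{N-2}\times O_2$ embeds in $O_N$ as the stabilizer of a nondegenerate 2-plane decomposition of the defining representation, the restriction of $G_N(\lambda)$ to this subgroup splits as $\bigoplus_v M_v \otimes G_{N-2}(v)$ for some multiplicity spaces $M_v$ carrying an $O_2$-action, and the goal is to identify $M_v$ with $G_2(\lambda - v)$.

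First I would iterate the single-step branching rule: restricting $O_N \downarrow O_{N-1} \downarrow O_{N-2}$ produces a sum over pairs $(\mu, v)$ of partitions with $v$ interlacing $\mu$ and $\mu$ interlacing $\lambda$. A quick bookkeeping check shows that composing two interlacing conditions is equivalent to requiring $v \subseteq \lambda$ together with the constraint that $\lambda - v$ has at most two boxes in each column (since each single-step restriction removes at most one box per column). This already matches the shape condition appearing in the proposition. Second, I would compute the action of the complementary $O_2$ factor on the multiplicity space of each $G_{N-2}(v)$: by the classical seesaw inside $O_N$, this action must combine the intermediate-$\mu$ data into an $O_2$-representation, and irreducible $O_2$-representations are parametrized by non-negative integers (with an additional sign character from $\pi_0(O_2)$). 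Comparing the $O_2$-character obtained by summing over valid $\mu$'s with the known Weyl-type character of $G_2(\lambda - v)$ identifies the multiplicity space with $G_2(\lambda - v)$.

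Finally I would assemble the two observations into the stated decomposition, checking that the partitions $v$ that arise are precisely the $N-2$-orthogonal ones (i.e.\ valid highest weights for $G_{N-2}$). The main obstacle will be the last step: because $O_N$ is disconnected, keeping track of the sign characters on both sides requires care, and one must use Littlewood's modification rules to handle the boundary cases where $v$ or $\lambda - v$ has length exceeding the rank bound. A clean way to organize this bookkeeping is via Frobenius reciprocity combined with a Cauchy-like identity for orthogonal characters, which reduces the verification to a symmetric-function identity that can be checked term by term on monomial weights.
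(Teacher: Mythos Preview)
The paper does not supply a proof of this proposition at all: it is quoted as ``Proctor 10.3'' from \cite{proctor_young_1994} with the prefatory remark ``Proctor gives the following proposition,'' and no argument follows. So there is no in-paper proof to compare your proposal against.

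As for the proposal on its own merits, the first step is sound: iterating the single-step rule $O_N\downarrow O_{N-1}\downarrow O_{N-2}$ does produce exactly those $v\subseteq\lambda$ with $\lambda-v$ a vertical-2-strip, and the count of intermediate $\mu$'s gives the correct dimension of the $O_2$-multiplicity space. The gap is in the second step. You assert that the $O_2$-action on the multiplicity space can be read off from the intermediate $\mu$'s and then ``compared with the known Weyl-type character of $G_2(\lambda-v)$,'' but two things are missing. First, $G_2(\lambda-v)$ has not been defined: irreducible $O_2$-modules are indexed by a non-negative integer and a sign, not by skew shapes, so the proposition is really asserting a specific (generally reducible) $O_2$-module attached to $\lambda-v$, and identifying which one is the entire content of the statement. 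Second, the chain $O_{N-2}\subset O_{N-1}\subset O_N$ is not compatible with the splitting $O_{N-2}\times O_2\subset O_N$ (the intermediate $O_{N-1}$ does not commute with $O_2$), so the basis indexed by the $\mu$'s is not an $O_2$-weight basis and the ``seesaw'' you invoke does not directly apply. You would instead need to work with the dual pair $(O_{N-2},O_2)$ from the start, or compute characters on $O_{N-2}\times O_2$ simultaneously via a Littlewood-type determinantal identity---which is closer to what Proctor actually does.
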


\subsection{Proctor Tableaux and Gelfand-Tsetlin-Type Patterns}\label{Proctor-GT}

To create Gelfand-Tsetlin-type patterns to correspond with the odd special orthogonal tableaux rules given by Proctor, we must take into consideration the effect of the $2c$ orthogonal condition and the $2m$ protection condition. We begin with our top row entries being equal to $\lambda$. The top $n$ rows will have $n$ entries. The n+1st row will have $n-1$ entries, the $n+2$nd row will have $n-2$ entries, etc, and the bottom row will have 1 entry.\\ 

\par To biject with the $2m$ protection condition, we use the following rules in our patterns. \begin{itemize}
    \item In order to check the following rules, all of the rows in our Gelfand-Tsetlin-type patterns must have $n$ entries. We add 0s to the ends of our rows that have less than $n$ entries such that each row now has $n$ entries (including the added 0's). Additionally, we add a "0th row" at the bottom of our Gelfand-Tsetlin-type pattern that is made of n 0's. 
    \item If there is a non-left-leaning 0 entry in an even row, we define $m$ and $i$ so that $a_{2m-2, i}=0$. 
    \item Then, define $j = 2m-i$.
    \item We check to see if $a_{2m, j} \geq a_{2m-1, j}$. If it is, we check the following:
    \begin{itemize}
        \item $a_{2m-1, j-1} \geq a_{2m-2, j-1}$.
        \item $a_{2m-1,j-1} \geq a_{2m, j}$
        \item $a_{2m-2, j} \leq 1$ 
    \end{itemize}
\end{itemize}

\par To biject with the $2c$ orthogonal condition, we have a restriction on the entries in an even row, $2c$. For entries in the $2c$ row, let the orthogonal sum be the sum of the entries such that for $1 \leq k \leq n$, $a_{2c, k} = a_{2c, k}$ for $a_{2c, k} \leq 2$ and $a_{2c, k} = 2$ for $a_{2c, k} > 2$. This sum must be less than or equal to $2c$.

\subsection{Proctor Tableaux and Ice}\label{ProctorICE}

Using the above restrictions on Gelfand-Tsetlin-type patterns, we show that corresponding ice models do not follow naturally. In the Sundaram and Koike-Terada bijections to ice, only Gelfand-Tsetlin-type patterns that are strictly decreasing across rows have corresponding ice models. Non-strict Gelfand-Tsetlin-type patterns do not have corresponding statistical mechanical meaning in the form of ice models.\\ 
\begin{prop}
There exists no ice model in bijection with Proctor tableaux.
\end{prop}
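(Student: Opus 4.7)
The plan is to show that any candidate ice model associated to Proctor tableaux, built in the spirit of Sections \ref{SundaramSection} and \ref{KTSection}, would have to arise through a strict Gelfand-Tsetlin-type pattern intermediate — and then to exhibit that such strictification is structurally incompatible with the $2c$ orthogonal and $2m$ protection conditions. The bijection would thus break down at the stage where one passes from the pattern to the ice.

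First, I would recall why the ice models in the Sundaram and Koike-Terada cases rely on strictness of the rows of their Gelfand-Tsetlin-type patterns: the entries of a row record the distinct column positions of the up-arrows in the corresponding row of ice, so two equal entries in a row are meaningless as ice data. Any ice model within the framework of this paper thus requires a bijection Proctor tableaux $\leftrightarrow$ strict Proctor Gelfand-Tsetlin-type patterns $\leftrightarrow$ ice states. I would then argue that the middle bijection cannot exist: the $2m$ protection condition, as translated in Section \ref{Proctor-GT}, explicitly forces equalities of the form $a_{2m-1,j-1}=a_{2m-2,j-1}$ (left-leaning entries) in positions determined by the data; while the $2c$ orthogonal condition bounds the orthogonal sum in row $2c$ by $2c$, forcing repetitions of small entries $\{0,1,2\}$ as soon as the row is wide enough. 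Both obstructions are inherent to the branching rules from Proctor rather than artifacts of one particular pattern, so strictness fails systematically.

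To make this concrete, I would produce a minimal explicit counterexample, for instance in $SO(7)$ with a partition $\lambda$ for which one of Proctor's tableaux has a column of consecutive small entries triggering both conditions. I would display the associated Gelfand-Tsetlin-type pattern, identify a row in which two entries are necessarily equal (by the protection condition or by the orthogonal sum constraint), and then verify that any local modification designed to restore strictness would violate one of the two Proctor conditions and thereby leave the tableaux class. This exhibits the obstruction at the level of a single pattern rather than counting, which is the cleanest way to demonstrate non-existence.

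The main obstacle I expect is not the combinatorics of the counterexample itself, but the informality of the statement: "no ice model" is an existential claim over a class of constructions that the paper does not axiomatize. I would therefore phrase the conclusion carefully — ruling out ice models built by the strict Gelfand-Tsetlin paradigm of Sections \ref{SundaramSection} and \ref{KTSection}, which is the only such mechanism available here — rather than attempting to exclude arbitrary exotic lattice models. Under this reading, the obstruction via left-leaning entries forced by protection, combined with repeated small entries forced by orthogonality, suffices to conclude the proof.
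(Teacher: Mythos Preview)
Your approach is essentially the paper's: rule out ice by showing that no strict Proctor Gelfand--Tsetlin-type pattern can exist, so the middle bijection in the chain tableaux $\leftrightarrow$ strict patterns $\leftrightarrow$ ice collapses. Your framing of why strictness is forced by the ice formalism, and your caveat about what ``no ice model'' can reasonably mean, are both more careful than what the paper actually writes.

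The execution in the paper is much blunter than what you outline. It uses only the $2c$ orthogonal condition and a single arithmetic check: for $n=4$, row $4$ (an even row with $c=2$) has four entries, and the smallest possible strictly decreasing filling is $(3,2,1,0)$; its orthogonal sum is $2+2+1+0=5>4=2c$, so no strict filling of that row exists at all, for any $\lambda$. That already kills the construction. You do not need the $2m$ protection condition, nor a specific tableau triggering it, nor any ``local modification'' argument; the orthogonal-sum bound alone is a hard obstruction as soon as the relevant even row is wide enough. (Your proposed $SO(7)$, i.e.\ $n=3$, would in fact work by the same count: row $2$ has three entries, smallest strict filling $(2,1,0)$, orthogonal sum $3>2$.) So your plan is sound but overbuilt; the paper's proof is one sentence.
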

\begin{proof}
In the case of $n=4$, we can look at the fourth row. The smallest possible strict configuration would be $\lambda_{4} = 3, 2, 1, 0$. However, the orthogonal sum of that row, according to the orthogonal restrictions on Gelfand-Tsetlin-type patterns, is $5$, which is greater than $4$, thus breaking the restriction. Therefore, in the case of $n=4$, there are no strict Gelfand-Tsetlin-type patterns, and consequently no valid ice models. 
\end{proof}

\section{Concluding Remarks}
We now have a fully functioning ice model stemming from Sundaram tableaux that produces a type B deformation times a type B character. However, we used just one of many possible tableaux rules. In particular, we know Koike-Terada tableaux rules produce valid statistical mechanical ice models. Thus, determining weights for these ice models and producing a type B Weyl character deformation would be useful. We also note that there does not seem to be any research on ice models for Cartan type D. Thus, a potential future research direction is considering the type D case.

\nocite{*}

\printbibliography
\newpage
\Addresses

\end{document}